\numberwithin{equation}{section}
\newtheorem{theorem}{Theorem}[section]
\newtheorem{proposition}[theorem]{Proposition}
\newtheorem{lemma}[theorem]{Lemma}
\theoremstyle{definition}
\newtheorem{defn}{Definition}[section]
\def\XXint#1#2#3{{\setbox0=\hbox{$#1{#2#3}{\int}$}
		\vcenter{\hbox{$#2#3$}}\kern-.5\wd0}}
\def\B{\mathbb{R}^n}
\def\R{\mathbb{R}_+^{n+1}}
\def\OR{\overline{\mathbb{R}_+^{n+1}}}
\def\ou{\overline{u}}
\def\ov{\overline{v}}
\def\ow{\overline{w}}
\def\o{\overline}
\def\l{\lambda}
\def\s{(-\Delta)^s}
\def\S{\mathbb{S}^{n-1}}
\def\e{\varepsilon}
\def\r{\mathbb{R}}
\def\E{\mathcal{E}}
\def\dv{\mathrm{div}}
\def\ve{\varepsilon}
\def\D{\Delta}
\def\vp{\varphi}
\def\S{\mathcal{S}}
\begin{document}
\title[Fractional Gel'fand]{Partial regularity of stable solutions to the fractional Gel'fand-Liouville equation}

\author[A. Hyder]{Ali Hyder}
\address{\noindent Ali Hyder, Department of Mathematics, Johns Hopkins University, Krieger Hall, Baltimore, MD, 21218}
\email{ahyder4@jhu.edu}
	
\author[W. Yang]{Wen Yang}
\address{\noindent Wen ~Yang,~Wuhan Institute of Physics and Mathematics, Chinese Academy of Sciences, P.O. Box 71010, Wuhan 430071, P. R. China; Innovation Academy for Precision Measurement Science and Technology, Chinese Academy of Sciences, Wuhan 430071, P. R. China.}
\email{wyang@wipm.ac.cn}

\thanks{The first author is supported by the SNSF   Grant No.  P400P2-183866.}
\thanks{The second author is partially supported by NSFC No.11801550 and NSFC No.11871470.}

\begin{abstract}
We analyze  stable weak solutions to the fractional Gel'fand problem
\begin{equation*}
(-\Delta)^su=e^u\quad\mathrm{in}\quad \Omega\subset\B.
\end{equation*}
We prove that the dimension of the singular  set is at most $n-10s.$
\end{abstract}
\maketitle
{\bf Keywords}: Gel'fand equation, stable solution,  super critical equation, partial regularity

\section{Introduction}
Let $\Omega$ be an open subset of $\B$.  We consider the following fractional Gel'fand equation
\begin{align}
\label{eq-1}  	
(-\D)^s u=e^u\quad\text{in }\quad \Omega\subset\B,\qquad n\geq1,
\end{align}
where $s\in (0,1)$,  and  $u$ satisfies $$e^u\in L^1_{\mathrm{loc}}(\Omega)\quad \mathrm{and}\quad u\in L_s(\B).$$ Here the space $L_s(\B)$  is defined by
$$L_s(\B):=\left\{u\in L^1_{\mathrm{loc}}:\|u\|_{L_s(\B)}<\infty\right\},\quad \|u\|_{L_s(\B)}:=\int_{\r^n}\frac{|u(x)|}{1+|x|^{n+2s}}dx<\infty.$$
The non-local operator $(-\Delta)^s$ is  defined  by
\begin{equation*}
\label{1.deff}
\s\vp(x)=c_{n,s}~\mbox{P.V.}\int_{\B}\frac{\vp(x)-\vp(y)}{|x-y|^{n+2s}}dy,
\end{equation*}
with $c_{n,s}:=\frac{2^{2s-1}}{\pi^{n/2}}\frac{\Gamma(\frac{n+2s}{2})}{|\Gamma(-s)|}$  being a normalizing constant.
	
We are interested in the classical question of  regularity of solutions to \eqref{eq-1}. More precisely, we aim at studying the partial regularity of stable weak solutions of \eqref{eq-1}. By a  weak solution we mean that $u$ satisfies \eqref{eq-1} in the sense of distribution, that is
\begin{equation*}
\label{1.weak}
\int_{\r^n}u (-\D)^s \phi dx=\int_{\Omega}e^u\phi dx,\quad \forall \phi\in C_c^\infty(\Omega).
\end{equation*}
We recall that a solution $u$ to \eqref{eq-1} is said to be  a stable weak solution  if  $u\in \dot H^s(\Omega)$ and
\begin{equation}
\label{1.stable}
\frac{c_{n,s}}{2}\int_{\B}\int_{\B}\frac{(\phi(x)-\phi(y))^2}{|x-y|^{n+2s}}dxdy  \geq\int_{\B}e^u\phi^2dx, \quad \forall\phi\in C_c^\infty(\Omega),
\end{equation}
where the function space $\dot{H}^s(\Omega)$ is defined by
\begin{equation*}
\label{1.defset}
\dot{H}^s(\Omega):=\left\{u\in L_{\mathrm{loc}}^2(\Omega)\mid\int_{\Omega}\int_{\Omega}\frac{|u(x)-u(y)|^2}{|x-y|^{n+2s}}dxdy<+\infty\right\}.
\end{equation*}

For equation \eqref{eq-1} in the  whole space $\B$, the study on the classification of  stable solutions and finite Morse index solutions has attracted a lot of attentions in recent decades. In the classical case, that is $s=1$, Farina \cite{f2} and Dancer - Farina \cite{df} established non-existence of stable solutions to \eqref{eq-1} for $2\leq n\leq 9$ and non-existence of finite Morse index solutions to \eqref{eq-1} for $3\leq n\leq 9$. While for the fractional case, Duong - Nguyen \cite{dn} proved that Eq.  \eqref{eq-1} has no regular stable solution for $n<10s$. Later, the authors of this paper applied  monotonicity formula to give an optimal condition on $n,s$ such that Eq. \eqref{eq-1} has no stable solutions in whole space, see \cite{hy}. In addition, the authors also showed that  weak stable solutions are smooth, provided $n<10s$.
	
A counterpart issue for equation \eqref{eq-1} in bounded domain is to analyze the extremal solution. Let us first  recall the definition of extremal solutions. Given a bounded smooth domain $\Omega\subset\B$, consider the following problem
\begin{equation*}
\begin{cases}
\Delta u+\lambda f(u)=0\quad &\mathrm{in}\quad \Omega,\\
u=0~&\mathrm{on}\quad \partial\Omega,
\end{cases}
\end{equation*}
where $f:\mathbb{R}\to\r$ be a $C^1$ function which is convex and non-negative. There exists a constant $\lambda^*>0$ such that   for each $\lambda\in[0,\lambda^*)$ there exists a unique solution $u_\lambda(x)$, which is a classical stable solution, see \cite[chapter 3]{du}. The solution $u_{\lambda^*}:=\lim_{\lambda\uparrow\lambda^*}u_\lambda$     is called the extremal solution. When $f(u)$ is the exponential nonlinearity, it is proved that if dimension $n\leq 9$, the extremal solution is always smooth in the classical setting, see \cite{du}, while in  the fractional case, Ros-Oton - Serra \cite{r1,r2} proved that the extremal solutions are regular for $n<10s$. For more general nonlinearity $f(u)$, very recently, Cabr\'e et al. \cite{cfrs} showed that  the extremal solutions are regular when $n\leq 9$ and $f(u)$ is positive, non-decreasing, convex and superlinear at $\infty$. The restriction on the dimension $n<10$ is sharp in the sense that  there are singular weak stable solutions for $n\geq10$. Thus,  a natural question is   to understand the dimension of singular sets   when $n\geq10$ ($n\geq10s$ for the fractional case). This problem has been widely studied for the Lane-Emden equation, that is the nonlinearity is given by  $f(u)=u^p$,  in various settings, e.g., estimating the singular set of stable solutions, finite Morse index solutions and stationary solutions,  see e.g. \cite{ddf,du,p1,p2,w0}. While for the exponential nonlinearity, in the classical case, the partial regularity result has been studied by Wang \cite{w1,w2} and he proved the Hausdorff dimension of the singular set for weak stable solutions is at most $n-10$. In dimension $n=3$, Da Lio \cite{dalio} showed that the dimension of singular set for stationary solution can be at most $1$.   

	\begin{defn} A point $x_0\in\Omega$ is said to be a singular point for a solution $u$ to \eqref{eq-1} if $u$ is not bounded in any small neighborhood of $x_0$. The singular set $\mathcal{S}$ is the collection of all singular points.    \end{defn}
	It follows  from the above  definition that the singular set $\mathcal{S}$ is closed in $\Omega$. Moreover, by standard regularity theory   one gets that $u$ is regular  in $\Omega\setminus\mathcal{S}$.
	
Our main result is the following:
\begin{theorem}
\label{th1.1}
Let $u$ be a stable weak solution of \eqref{eq-1}. Then the Hausdorff dimension of the singular set $\mathcal{S}$ is at most  $n-10s$.
\end{theorem}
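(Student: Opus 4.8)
The plan is to run a Federer-type dimension reduction driven by the monotonicity formula of \cite{hy}, carried out on the Caffarelli--Silvestre extension so that translation invariance of blow-up limits genuinely lowers the dimension of the underlying fractional problem. First I would pass to the extension: let $\o u$ be the $s$-harmonic extension of $u$ to $\R$, so that $\dv(t^{1-2s}\nabla\o u)=0$ in $\R$, $\o u(\cdot,0)=u$, and $-\kappa_s\lim_{t\to0^+}t^{1-2s}\partial_t\o u=e^u$ on $\Omega\times\{0\}$, while \eqref{1.stable} turns into a weighted Dirichlet inequality for the competitors of $\o u$. The invariance compatible with \eqref{eq-1} is $u\mapsto u_{x_0,\l}(x):=u(x_0+\l x)+2s\log\l$ (together with the corresponding rescaling of $\o u$), under which stable weak solutions are mapped to stable weak solutions. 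I would then invoke from \cite{hy} the monotonicity formula: for each $x_0\in\Omega$ there is a functional $E(r;x_0,u)$ — a combination of the rescaled weighted Dirichlet energy of $\o u$ on half-balls, the rescaled mass $\int e^u$ on the base, and a boundary correction involving $u+2s\log r$ — which is nondecreasing in $r\in(0,r_0)$, obeys the scaling identity $E(\l r;x_0,u)=E(r;0,u_{x_0,\l})$, and whose $r$-derivative controls a weighted spherical integral of $|X\!\cdot\!\nabla\o u+2s|^2$; in particular $E(\cdot\,;x_0,u)$ is constant on a subinterval if and only if $\o u$ is log-homogeneous there, $\o u(X)=-2s\log|X|+\Psi(X/|X|)$.

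Next I would set up the density $\Theta(x_0):=\lim_{r\to0^+}E(r;x_0,u)$, finite by the a~priori bounds coming from $e^u\in L^1_{\mathrm{loc}}$ and $u\in\dot H^s$, and upper semicontinuous in $x_0$ by monotonicity together with the continuity of $E(r;\cdot\,,u)$ for fixed $r$. The heart of the matter is an $\varepsilon$-regularity statement: there exist $\varepsilon_0>0$ and a constant $\Theta^\ast$ (the density of every regular point) such that if $E(r;x_0,u)\le\Theta^\ast+\varepsilon_0$ for some $r\in(0,r_0)$ then $u\in L^\infty$ near $x_0$; consequently $\mathcal S=\{x_0\in\Omega:\Theta(x_0)\ge\Theta^\ast+\varepsilon_0\}$. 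I would prove this by contradiction and compactness: rescaling a putative counterexample to unit scale, the uniform energy bounds provided by the monotonicity formula let one extract a limit $u_\infty$ which is again a stable weak solution on $\B$ with $E(\cdot\,;0,u_\infty)\equiv\Theta^\ast$, hence log-homogeneous by the rigidity above; then $0\in\mathcal S(u_\infty)$ is contradicted via upper semicontinuity of the singular set along the convergence.

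With these tools in hand I would run the dimension reduction. If $\dim_{\mathcal H}\mathcal S>n-10s$, the monotonicity formula and the standard Federer argument produce a blow-up limit $\o v$ on $\R$ that is log-homogeneous, stable, and singular at the origin; iterating blow-ups at non-origin singular points — each step yielding a new log-homogeneous stable solution with one additional translation invariance — one may assume $\o v$ is invariant under translations along a subspace $V\subset\B\times\{0\}$ with $\dim V=k>n-10s$. But then $\o v$ is exactly the $s$-harmonic extension of a stable weak solution $v$ of $\s v=e^v$ in $V^\perp\cong\mathbb{R}^{n-k}$ having the origin as a singular point, and since $n-k<10s$ the regularity result of \cite{hy} forces $v$ to be smooth — a contradiction. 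Hence $\dim_{\mathcal H}\mathcal S\le n-10s$.

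The main obstacle is the $\varepsilon$-regularity step and, behind it, the compactness of blow-up sequences. Since $u$ is only a weak solution and $e^u$ is no better than $L^1_{\mathrm{loc}}$, one has to upgrade the weak convergence of the rescalings $\o u_{x_0,\l}$ to a convergence strong enough to pass to the limit both in the nonlinearity $e^{u_{x_0,\l}}$ and in the stability inequality \eqref{1.stable}, all while controlling the non-local and degenerate-weight contributions uniformly along the blow-up and checking that the limit is still a stable weak solution with the expected homogeneity. Once this compactness and the accompanying $\varepsilon$-regularity are established, the monotonicity formula and its rigidity are quoted from \cite{hy} and the remaining Federer combinatorics are routine.
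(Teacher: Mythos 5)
Your proposal runs a Federer-type dimension reduction built on a monotonicity formula, which is a genuinely different route from the one the paper takes, and the central step you flag as ``the main obstacle'' is in fact a gap that the rest of your argument cannot close. The paper never invokes a monotonicity formula, density, or blow-up compactness for the partial regularity. Instead it proves the $\varepsilon$-regularity \emph{directly}, by an energy decay estimate for the two-term quantity
$$\mathcal E(u,x_0,r)=r^{2s-n}\int_{B_r(x_0)}e^u\,dx+r^{4s-n-2}\int_{D_r(x_0)}t^{1-2s}e^{\o u}\,dx\,dt,$$
using a decomposition $\o u=\o v+\o w$ where $\o v$ is the Riesz-type potential of $e^u\chi_{B_r}$ (Lemma \ref{lem-L^2}: $L^2$ smallness of $\o v$ via Young/interpolation and the Farina estimate) and $\o w$ is the degenerate-harmonic remainder (Lemma \ref{lem-est-w}: a sub-mean-value / Harnack inequality for $e^{\o w}$ in the half-ball). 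The interplay of Jensen's inequality for $\o u$ (Lemma \ref{le3.1}) with the stability estimate \eqref{2.1} closes the decay loop without ever taking a limit of rescaled solutions. Iterating gives a Morrey bound $\int_{B_r(x)}e^u\le Cr^{n-2s+\alpha}$, and then Lemma \ref{th2.morrey} plus decomposition $u=u_1+u_2$ yields continuity. Finally, the dimension bound is obtained by combining the rescaled $\varepsilon$-regularity with $e^u\in L^p_{\mathrm{loc}}$ for all $p<5$ (another consequence of stability) and Besicovitch covering, giving $\dim_{\mathcal H}\mathcal S\le n-2ps$ for each $p<5$, hence $\le n-10s$. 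No tangent cones, no rigidity, no strong compactness.

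The gap in your argument is not cosmetic. First, the monotonicity formula in \cite{hy} is designed for entire stable solutions on $\r^n$; you would need a localized version with controllable error terms valid near an interior point of $\Omega$, and you would need the rigidity statement (constancy $\Leftrightarrow$ log-homogeneity of $\o u$) for weak solutions with only $e^u\in L^1_{\mathrm{loc}}$, neither of which is available off the shelf. Second, and more seriously, the compactness needed to extract a blow-up limit that remains a stable weak solution requires upgrading weak-$L^1$ control on $e^{u_{x_0,\l}}$ to a convergence strong enough to pass to the limit in the nonlinearity and in \eqref{1.stable}; this is precisely where the difficulty lies for the exponential nonlinearity, and you acknowledge it without resolving it. Third, the slicing step --- turning a $V$-invariant log-homogeneous extension $\o v$ into a stable weak solution of $(-\Delta)^s v=e^v$ on $V^\perp$ whose trace nonlinearity and stability inequality are inherited --- is nontrivial for the Caffarelli--Silvestre extension and is not addressed. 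Since the paper's argument avoids all three issues by never forming a blow-up limit, your route would require substantial new machinery beyond what is quoted from \cite{hy}; until the compactness and local monotonicity/rigidity are actually established, the proof is incomplete.
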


Theorem \ref{th1.1} is a nonlocal version of Wang's \cite{w1,w2} result. In order to prove small energy regularity results (see Proposition \ref{pr3.1} ), we shall consider the following energy 
$$\mathcal{E}(u,x_0,r)=r^{2s-n}\int_{B_r(x_0)}e^udx+r^{4s-n-2}\int_{B_r^{n+1}(x_0)\cap\R}t^{1-2s}e^{\ou} dxdt,$$
where $\ou$ is the Caffarelli-Silvestre extension of $u$ in $\R $, i.e.,
\begin{equation*}
\label{1.poisson}	
\ou(X)=\int_{\mathbb{R}^n}P(X,y)u(y)dy,~X=(x,t)\in\B\times(0,+\infty),
\end{equation*}
where
$$P(X,y)=d_{n,s}\frac{t^{2s}}{|(x-y,t)|^{n+2s}},$$
and $d_{n,s}>0$ is a normalizing constant such that $\int_{\B}P(x,y)=1$, see \cite{cs}. The function  $\ou$ satisfies
\begin{equation*}
\label{1.ext}
\begin{cases}
\mathrm{div}(t^{1-2s}\nabla \ou)=0\quad &\mathrm{in}~ \R ,\\
 -\lim_{t\to0}t^{1-2s}\partial_t\ou=\kappa_s\s u=\kappa_s e^u, &\mathrm{in}~ \Omega,
\end{cases}	
\end{equation*}
where $\kappa_s=\frac{\Gamma(1-s)}{2^{2s-1}\Gamma(s)}$.

It is important to note that  each term in the energy $\mathcal{E}(u,x_0,r)$ controls the other one in a suitable way. More precisely,  due to the stability hypothesis, the second term   controls the $L^2 $ norm of $e^u$, see \eqref{2.1}, where as, by Jensen's inequality, the second term is controlled by the first one,  see Lemma \ref{le3.1}.  This interplay turned out to be very crucial in proving the energy decay estimate, see Proposition \ref{pr3.1}.   

 We remark that  the energy decay estimate does not seem to work if we only consider one of the two terms in $\mathcal{E}(u,x_0,r)$. For instance, on one hand, if we only consider the first term (as in the local case), then we lack a Harnack type inequality for non-negative fractional sub-harmonic functions.  However, in the fractional case, we do have  a Harnack type inequality involving the extension function, see Lemma \ref{lem-est-w}. This suggests to consider the send term in the definition of the energy. On the other hand, if we only consider the second term in the energy, then the $L^1$ norm of $\ov$ (as defined in \ref{def-barv}) is of the order $\sqrt\E$, which is not good enough to prove the energy decay estimate.


The article is organized as follows: In section 2 we list some preliminary results, including the conclusions presented in our previous work \cite{hy} and some known results that would be used in the current article. In section 3, we give the proof of Theorem \ref{th1.1}.

\bigskip
\begin{center}
Notations:
\end{center}
\begin{enumerate}
\item [$X=(x,t)$] \quad represent  points in $\mathbb{R}_+^{n+1}=\B\times[0,\infty)$ and $\B=\partial\R.$
\smallskip
\item [$B_r(x)$] \quad the ball centered at $x$ with radius $r$ in  $\r^{n}$, \,$B_r:=B_r(0)$.
\smallskip
\item [$B_r^{n+1}(x)$] \quad the ball centered at $x$ with radius $r$ in  $\r^{n+1}$, $B_r^{n+1}:=B_r^{n+1}(0)$.
\smallskip
\item [$D_r(x)$] \quad the intersection of $B_r^{n+1}(x)$ and $\R$, i.e., $B_r^{n+1}(x)\cap\R,$ $D_r:=D_r(0)$.\smallskip
\item [$u_+$]\quad represents the non-negative part of $u$, i.e., $u_+=\max(u,0)$.
\smallskip
\item [$C$] \quad  a generic positive constant which may change from line to line.
\end{enumerate}

\medskip
\section{Preliminary results}

In this section we present several results that will be used in next section. First, we extend the stability condition in the fractional setting. Notice that $\ou$ is well-defined as $u\in L_s(\B)$. Moreover, $t^{\frac{1-2s}{2}}\nabla\ou\in L_{\mathrm{loc}}^2(\Omega\times[0,\infty))$ whenever $u\in\dot{H}^s(\Omega)$.  We recall from \cite{hy} that the stability condition \eqref{1.stable} can be generalized to the extended function $\ou$. Precisely, if $u$ is stable in $\Omega$ then
\begin{equation}
\label{2.stable-exe}
\int_{\R}t^{1-2s}|\nabla\Phi|^2dxdt\geq
\kappa_s\int_{\B}e^u\phi^2dx,
\end{equation}
for every $\Phi\in C_c^\infty(\OR)$ satisfying $\phi(\cdot):=\Phi(\cdot,0)\in C_c^\infty(\Omega)$.  
The following Farina type estimate has been proved in   \cite[Lemma 3.4]{hy}:
\begin{lemma}[\cite{hy}]
\label{pr2.1}	
Let $u\in \dot{H}^s(\Omega)$ be a weak stable solution to \eqref{eq-1}. Given a function $\Phi\in C_c^\infty(\OR)$ be of the form $\Phi(x,t)=\phi(x)\eta(t)$ for some $\phi\in C_c^\infty(\Omega)$ and $\eta=1$ in a small neighborhood of the origin,   we have for every $\alpha\in(0,2)$
\begin{equation}
\label{2.1}
\begin{aligned}
(2-\alpha)\kappa_s\int_{\B}e^{(1+2\alpha) u}\phi^2dx
&\leq 2\int_{\R}t^{1-2s}e^{2\alpha\ou}|\nabla \Phi|^2dxdt\\
&\quad-\frac12\int_{\R}e^{2\alpha\ou}\nabla\cdot[t^{1-2s}\nabla\Phi^2]dxdt.
\end{aligned}
\end{equation}
\end{lemma}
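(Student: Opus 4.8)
The plan is to combine the extended stability inequality \eqref{2.stable-exe} with the weak formulation of the Caffarelli--Silvestre extension problem and to cancel the awkward term $\int_{\R}t^{1-2s}e^{2\alpha\ou}\Phi^2|\nabla\ou|^2\,dxdt$ between the two. First I would test \eqref{2.stable-exe} with $\Phi_{\mathrm{test}}:=e^{\alpha\ou}\Phi$; its trace on $\{t=0\}$ is $e^{\alpha u}\phi$ because $\eta(0)=1$, so the right-hand side of \eqref{2.stable-exe} becomes $\kappa_s\int_{\B}e^{(1+2\alpha)u}\phi^2\,dx$, and expanding $\nabla\Phi_{\mathrm{test}}=e^{\alpha\ou}(\alpha\Phi\nabla\ou+\nabla\Phi)$ together with $2\Phi\nabla\Phi=\nabla\Phi^2$ gives
\begin{equation*}
\int_{\R}t^{1-2s}e^{2\alpha\ou}\left(\alpha^2\Phi^2|\nabla\ou|^2+\alpha\,\nabla\ou\cdot\nabla\Phi^2+|\nabla\Phi|^2\right)dxdt\ \geq\ \kappa_s\int_{\B}e^{(1+2\alpha)u}\phi^2\,dx .
\end{equation*}

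Next I would produce an energy identity isolating $\int_{\R}t^{1-2s}e^{2\alpha\ou}\Phi^2|\nabla\ou|^2$: multiplying $\dv(t^{1-2s}\nabla\ou)=0$ by $w\Phi^2$ with $w:=\tfrac{1}{2\alpha}(e^{2\alpha\ou}-1)$ (so that $\nabla w=e^{2\alpha\ou}\nabla\ou$) and integrating by parts over $\R$, the boundary integral over $\{t=0\}$ equals $\kappa_s\int_{\B}e^u w\,\phi^2\,dx=\tfrac{\kappa_s}{2\alpha}\int_{\B}(e^{(1+2\alpha)u}-e^u)\phi^2\,dx$ by the Neumann condition $-\lim_{t\to0}t^{1-2s}\partial_t\ou=\kappa_s e^u$, so that
\begin{equation*}
\int_{\R}t^{1-2s}e^{2\alpha\ou}\Phi^2|\nabla\ou|^2\,dxdt+\int_{\R}t^{1-2s}w\,\nabla\ou\cdot\nabla\Phi^2\,dxdt=\frac{\kappa_s}{2\alpha}\int_{\B}\big(e^{(1+2\alpha)u}-e^u\big)\phi^2\,dx .
\end{equation*}
Multiplying this identity by $\alpha^2$ and inserting it into the stability inequality eliminates the $\Phi^2|\nabla\ou|^2$ term; collecting the remaining terms, and using $\alpha e^{2\alpha\ou}-\alpha^2 w=\tfrac{\alpha}{2}(e^{2\alpha\ou}+1)$, leaves
\begin{equation*}
\kappa_s\Big(1-\tfrac{\alpha}{2}\Big)\int_{\B}e^{(1+2\alpha)u}\phi^2\,dx+\frac{\alpha\kappa_s}{2}\int_{\B}e^u\phi^2\,dx\ \leq\ \frac{\alpha}{2}\int_{\R}t^{1-2s}(e^{2\alpha\ou}+1)\,\nabla\ou\cdot\nabla\Phi^2\,dxdt+\int_{\R}t^{1-2s}e^{2\alpha\ou}|\nabla\Phi|^2\,dxdt .
\end{equation*}
Finally I would write $(e^{2\alpha\ou}+1)\nabla\ou=\tfrac{1}{2\alpha}\nabla(e^{2\alpha\ou})+\nabla\ou$ and integrate by parts once more: the $\nabla(e^{2\alpha\ou})$ contribution becomes $-\tfrac14\int_{\R}e^{2\alpha\ou}\dv(t^{1-2s}\nabla\Phi^2)\,dxdt$, the lateral boundary term vanishing since $\partial_t\Phi^2\equiv0$ near $\{t=0\}$ (as $\eta\equiv1$ there), while the $\nabla\ou$ contribution becomes $\tfrac{\alpha\kappa_s}{2}\int_{\B}e^u\phi^2\,dx$, again by $\dv(t^{1-2s}\nabla\ou)=0$ and the Neumann condition. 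This cancels the $\tfrac{\alpha\kappa_s}{2}\int_{\B}e^u\phi^2$ on the left, and multiplying the resulting inequality by $2$ produces exactly \eqref{2.1}.

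The main obstacle is not this algebra but justifying it when $u$ is merely a weak stable solution: then $\ou$ need not be locally bounded near $\{t=0\}$, so $e^{\alpha\ou}\Phi$ need not be an admissible competitor in \eqref{2.stable-exe}, and the integrations by parts involving $e^{2\alpha\ou}$ and $w$ are not a priori legitimate. I would run the whole argument first with $\ou$ replaced by $\min(\ou,M)$ (equivalently $e^{2\alpha\ou}$ by $e^{2\alpha\min(\ou,M)}$), for which $e^{\alpha\min(\ou,M)}\Phi$ is an admissible test function after a routine density step from $C_c^\infty(\OR)$ and every integrand is bounded, and then let $M\to\infty$: the left-hand side of \eqref{2.1} converges by monotone convergence, and the right-hand side by monotone/dominated convergence once one knows $e^{2\alpha\ou}\in L^1_{\mathrm{loc}}(t^{1-2s}dxdt)$, which follows from Jensen's inequality for the Poisson kernel $P(X,\cdot)$ together with local integrability of $e^{(1+2\alpha)u}$ (a consequence of \eqref{2.stable-exe}; otherwise \eqref{2.1} holds vacuously). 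The compact support of $\nabla\Phi^2$ in $\OR$ and the bound $t^{\frac{1-2s}{2}}\nabla\ou\in L^2_{\mathrm{loc}}$ coming from $u\in\dot{H}^s(\Omega)$ make the remaining boundary bookkeeping routine.
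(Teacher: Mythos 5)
The paper does not actually prove this lemma; it is quoted directly from \cite[Lemma 3.4]{hy}, so there is no internal proof to compare against. Your argument, however, is correct and is the standard Farina-type computation in the Caffarelli--Silvestre extension: test \eqref{2.stable-exe} with $e^{\alpha\ou}\Phi$, test the degenerate harmonic equation for $\ou$ with $\tfrac{1}{2\alpha}(e^{2\alpha\ou}-1)\Phi^2$ to cancel $\alpha^2\int t^{1-2s}e^{2\alpha\ou}\Phi^2|\nabla\ou|^2$, observe $\alpha e^{2\alpha\ou}-\tfrac{\alpha}{2}(e^{2\alpha\ou}-1)=\tfrac{\alpha}{2}(e^{2\alpha\ou}+1)$, then split $(e^{2\alpha\ou}+1)\nabla\ou=\tfrac{1}{2\alpha}\nabla e^{2\alpha\ou}+\nabla\ou$ and integrate by parts once more, using that $\Phi=\phi\eta$ with $\eta\equiv1$ near $t=0$ kills the boundary flux from $\nabla\Phi^2$ and that $\dv(t^{1-2s}\nabla\ou)=0$ with the Neumann condition returns $\tfrac{\alpha\kappa_s}{2}\int e^u\phi^2$, which cancels. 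Multiplying by $2$ gives \eqref{2.1} exactly; all the algebra checks out.

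Two small caveats on the justification step. First, replacing $\ou$ by $\min(\ou,M)$ cannot be done literally in the extension equation, and if one tests stability with $e^{\alpha\min(\ou,M)}\Phi$ while testing $\dv(t^{1-2s}\nabla\ou)=0$ with $\tfrac{1}{2\alpha}(e^{2\alpha\min(\ou,M)}-1)\Phi^2$, the cancellation leaves a sign-indefinite residual on $\{\ou\ge M\}$; one needs compatible Lipschitz truncations (e.g.\ using $G_M(z)=\int_0^z e^{2\alpha\min(w,M)}\,dw$ together with the one-sided inequality $\chi_{\{\ou<M\}}\le 1$ in the stability term) so that the residual has a favourable sign before passing $M\to\infty$. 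This is routine but not automatic as phrased. Second, $e^{(1+2\alpha)u}\in L^1_{\mathrm{loc}}$ does not follow from \eqref{2.stable-exe} directly (that only yields $e^u\in L^1_{\mathrm{loc}}$); rather, the truncated form of \eqref{2.1} with uniformly bounded right-hand side gives this integrability a posteriori as $M\to\infty$, after which Jensen with the Poisson kernel yields $t^{1-2s}e^{2\alpha\ou}\in L^1_{\mathrm{loc}}$ as you indicate.
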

	
%

Though the following regularity result on Morrey's space is well-known, we give a proof for convenience. We recall that a function $f $ is in the Morrey's space $ M^p(\Omega)$  if  $f\in L^1(\Omega)$, and it  satisfies  $$ \int_{\Omega\cap B_r(x_0)}|f|dx\leq Cr^{n(1-\frac1p)}\quad\text{for every }B_r(x_0)\subset\B.$$ The norm $\|f\|_{M^p(\Omega)}$ is defined to be the infimum of constants $C>0$ for which the above inequality holds.   
\begin{lemma}
\label{th2.morrey}
Let $f\in M^{\frac{n}{2s-\delta}}(B_3)$  for some $\delta>0$. We set
$$\mathcal{R}_{s}f(x):=\int_{B_{1}}\frac{1}{|x-y|^{n-2s}}|f(y)|dy.$$
Then we have
$$\|\mathcal{R}_{s}f(x)\|_{L^\infty(B_{1})}\leq C(n,s,\delta)\|f\|_{M^{\frac{n}{2s-\delta}}(B_3)}.$$
 \end{lemma}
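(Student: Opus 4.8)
The plan is to prove the bound by estimating $\mathcal{R}_{s}f(x)$ pointwise, uniformly for $x\in B_1$, via a dyadic decomposition of the domain of integration combined with the defining inequality of the Morrey norm. Since $y$ ranges over $B_1$ and $x\in B_1$ we always have $|x-y|\le 2$, so I would set, for integers $k\ge-1$,
$$A_k(x):=\{\,y\in B_1:\ 2^{-k-1}\le|x-y|<2^{-k}\,\},\qquad B_1\setminus\{x\}=\bigcup_{k\ge-1}A_k(x),$$
and split the defining integral of $\mathcal{R}_sf(x)$ accordingly. We may assume $n>2s$, the complementary case being trivial since the kernel is then bounded.

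On $A_k(x)$ one has $|x-y|^{-(n-2s)}\le 2^{(k+1)(n-2s)}$, so
$$\int_{A_k(x)}\frac{|f(y)|}{|x-y|^{n-2s}}\,dy\ \le\ 2^{(k+1)(n-2s)}\int_{B_{2^{-k}}(x)}|f(y)|\,dy.$$
Since $x\in B_1$ and $2^{-k}\le2$, the ball $B_{2^{-k}}(x)$ lies in $B_3$; writing $p:=\frac{n}{2s-\delta}$, for which $n\bigl(1-\tfrac1p\bigr)=n-2s+\delta$, the Morrey hypothesis gives $\int_{B_{2^{-k}}(x)}|f|\,dy\le\|f\|_{M^{p}(B_3)}\,(2^{-k})^{\,n-2s+\delta}$. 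Inserting this, the $k$-th dyadic piece is at most $2^{\,n-2s}\,\|f\|_{M^{p}(B_3)}\,2^{-k\delta}$, and summing over $k\ge-1$ — a geometric series that converges precisely because $\delta>0$ — produces $\mathcal{R}_{s}f(x)\le C(n,s,\delta)\,\|f\|_{M^{p}(B_3)}$ for every $x\in B_1$, which is the assertion. For the remaining case $n\le2s$ the kernel $|x-y|^{2s-n}$ is bounded by $2^{2s-n}$ on $\{|x-y|\le2\}$, so I would simply estimate $\mathcal{R}_{s}f(x)\le2^{2s-n}\int_{B_1}|f|\le2^{2s-n}\,\|f\|_{M^{p}(B_3)}$ using the Morrey bound on the unit ball.

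I do not expect a genuine obstacle here: this is a standard dyadic Riesz-potential estimate. The two points I would take care with are, first, that the dyadic sum is summable only thanks to the strictly positive gain $\delta$ in the Morrey exponent — at $\delta=0$ it diverges logarithmically, so $f\in M^{n/2s}$ alone would yield only a BMO-type bound, not $L^\infty$ — and, second, that one must pass to the enlarged ball $B_3$ so that every dyadic ball $B_{2^{-k}}(x)$ with $x\in B_1$ remains inside the region where the Morrey norm of $f$ controls the local $L^1$ mass.
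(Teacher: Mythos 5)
Your proof is correct, and it is essentially the same argument as the paper's, organized slightly differently. The paper writes the potential in polar form, $\mathcal{R}_sf(x)\le\int_0^2\rho^{2s-n}F'(\rho)\,d\rho$ with $F(\rho)=\int_{B_\rho(x)}|f|$, integrates by parts, and then inserts the Morrey bound $F(\rho)\le\|f\|_{M^p}\rho^{n-2s+\delta}$ to reduce to $\int_0^2\rho^{\delta-1}\,d\rho<\infty$. Your dyadic decomposition into annuli $A_k(x)$ is precisely the discrete version of that integration by parts: bounding the kernel on each annulus and applying the Morrey estimate on $B_{2^{-k}}(x)$ produces the geometric series $\sum_k 2^{-k\delta}$, the discrete counterpart of $\int_0^2\rho^{\delta-1}\,d\rho$. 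Both hinge on the same two observations you flag — the strict positivity of $\delta$ for summability, and enlarging to $B_3$ so all the balls $B_{2^{-k}}(x)$ with $x\in B_1$ stay inside the domain of the Morrey norm. Your explicit handling of the degenerate case $n\le 2s$ is a small extra care the paper omits, though in the paper's setting with $s\in(0,1)$ and $n\ge1$ that case arises only for $n=1$, $s>1/2$, where the kernel is indeed bounded.
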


\begin{proof}
We set
$$F(r)=\int_{B_r(x)}|f(y)|dy.$$
Then
\begin{equation}
\label{2.m.1}
\begin{aligned}
\mathcal{R}_{s}f(x)\leq~& \int_{B_2}\frac{1}{|y|^{n-2s}}|f(x-y)|dy
=\int_0^2\rho^{2s-n}F'(\rho)d\rho,\quad x\in B_1,
\end{aligned}
\end{equation}
where $\rho=|x-y|.$ We can derive from \eqref{2.m.1} and integration by parts that
\begin{equation*}
\label{2.m.2}
\begin{aligned}
\mathcal{R}_{s}f(x)\leq~&\int_0^2\rho^{2s-n}F'(\rho)d\rho
=2^{2s-n}F(2)+(n-2s)\int_0^2\rho^{2s-n-1}F(\rho)d\rho\\
\leq~&\|f\|_{M^{\frac{n}{2s-\delta}}(B_3)}2^{2s-n}2^{n+\delta-2s}+(n-2s)\|f\|_{M^{\frac{n}{2s-\delta}}(B_2)}
\int_0^2\rho^{\delta-1}d\rho\\
\leq~&C\|f\|_{M^{\frac{n}{2s-\delta}}(B_3)}.
\end{aligned}
\end{equation*}
Thus we finish the proof.
\end{proof}

For any $x_0\in B_1,$ we set
\begin{align}
\label{def-ur}
u^{\l}(x):=u(x_0+\l x)+2s\log\l,
\end{align}
The following lemma is crucial in the proof of small energy regularity estimate.
\begin{lemma}
\label{Ls}
Let $u$ be a stable solution to \eqref{eq-1} with $\Omega=B_1$.  Let $u^{\l}$ be defined in \eqref{def-ur} for some $|x_0|<1$ and $0<\l<(1-|x_0|)^{1+\frac{n}{2s}}$. Then $$\|u^{\l}_+\|_{L_s(\r^n)}\leq C\left(1+\|u_+\|_{L_s(\r^n)}\right),$$
for some $C>0$ independent of $u$.
\end{lemma}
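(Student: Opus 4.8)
The plan is to rewrite the integral defining $\|u^{\l}_+\|_{L_s(\mathbb{R}^n)}$, change variables back to $y=x_0+\l x$, and split the result into a region far from $x_0$ (controlled by $\|u_+\|_{L_s(\mathbb{R}^n)}$) and a region near $x_0$ (controlled by an absolute constant, via the natural scaling bound for $e^u$). First I would record what the hypothesis forces: with $r_0:=1-|x_0|\in(0,1]$ one has $0<\l<r_0^{1+\frac n{2s}}\le r_0\le1$, so $2s\log\l<0$ and $\l^{2s}<r_0^{n+2s}$. Combined with the elementary inequalities $(a+b)_+\le a_+$ for $b\le0$ and $(\log\tau)_+\le\tau$ for $\tau>0$, this gives, for $y\in B_1$,
\[
\big(u(y)+2s\log\l\big)_+\le u_+(y),\qquad \big(u(y)+2s\log\l\big)_+=\big(\log(\l^{2s}e^{u(y)})\big)_+\le\l^{2s}e^{u(y)} .
\]
Substituting $y=x_0+\l x$ in $\|u^{\l}_+\|_{L_s(\mathbb{R}^n)}=\int_{\mathbb{R}^n}\big(u(x_0+\l x)+2s\log\l\big)_+\frac{dx}{1+|x|^{n+2s}}$ turns it into
\[
\|u^{\l}_+\|_{L_s(\mathbb{R}^n)}=\int_{\mathbb{R}^n}\frac{\big(u(y)+2s\log\l\big)_+\,\l^{2s}}{\l^{n+2s}+|y-x_0|^{n+2s}}\,dy .
\]

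On the far region $\{|y-x_0|\ge r_0/2\}$ I would bound $(u(y)+2s\log\l)_+\le u_+(y)$ and use the elementary kernel estimate $\l^{2s}\big(\l^{n+2s}+|y-x_0|^{n+2s}\big)^{-1}\le C(n,s)(1+|y|^{n+2s})^{-1}$ there, which follows from $\l^{2s}<r_0^{n+2s}\le(2|y-x_0|)^{n+2s}$, $\l<1$ and $|x_0|<1$ upon distinguishing $|y|\le2$ and $|y|>2$. Hence the far region contributes at most $C(n,s)\|u_+\|_{L_s(\mathbb{R}^n)}$.

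On the near region $\{|y-x_0|<r_0/2\}\subset B_1$ I would use the second pointwise bound, so this contribution is $\le\l^{4s}\int_{B_{r_0/2}(x_0)}e^u\big(\l^{n+2s}+|y-x_0|^{n+2s}\big)^{-1}dy$. The crucial input is the scaling/Morrey bound $\int_{B_\rho(x_0)}e^u\,dx\le C(n,s)\,\rho^{n-2s}$ for $\rho\lesssim r_0$, obtained by testing the stability inequality \eqref{2.stable-exe} with $\Phi(x,t)=\phi(x)\eta(t)$, where $\phi\equiv1$ on $B_\rho(x_0)$, $\operatorname{supp}\phi\subset B_{3\rho/2}(x_0)$ (whose closure lies in $B_1$ when $\rho\lesssim r_0$), $|\nabla\phi|\le C/\rho$, and $\eta$ is a cut-off in $t$ at scale $\rho$, since then $\int_{\R}t^{1-2s}|\nabla\Phi|^2\le C(n,s)\rho^{n-2s}$ by scaling. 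Granting this, decompose $B_{r_0/2}(x_0)$ dyadically into $B_\l(x_0)$ and the annuli $A_j=\{2^j\l\le|y-x_0|<2^{j+1}\l\}$; there $\l^{n+2s}+|y-x_0|^{n+2s}$ is $\ge\l^{n+2s}$ and $\ge(2^j\l)^{n+2s}$ respectively, so the Morrey bound gives $\int_{A_j}e^u\big(\l^{n+2s}+|y-x_0|^{n+2s}\big)^{-1}\le C(n,s)\,2^{-4sj}\l^{-4s}$; summing the geometric series (convergent since $s>0$, whatever the sign of $n-2s$) yields $\l^{4s}\int_{B_{r_0/2}(x_0)}e^u\big(\l^{n+2s}+|y-x_0|^{n+2s}\big)^{-1}dy\le C(n,s)$. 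The degenerate case $\l\gtrsim r_0$ (no genuine annuli) is handled directly by the same Morrey bound together with $\l<r_0^{1+\frac n{2s}}$.

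Adding the two contributions yields $\|u^{\l}_+\|_{L_s(\mathbb{R}^n)}\le C(n,s)\big(1+\|u_+\|_{L_s(\mathbb{R}^n)}\big)$ with $C$ independent of $u$. The step I expect to be the main obstacle is making the near-$x_0$ estimate uniform in $u$, $x_0$ and $\l$: this is exactly where stability enters — mere local integrability of $e^u$ would give a $u$-dependent constant — and one must check that the cut-off used in \eqref{2.stable-exe} can be supported inside $B_1$ even when $x_0$ is close to $\partial B_1$ (which is precisely why the admissible range of $\l$ is calibrated to $r_0=1-|x_0|$), while correctly tracking the interplay between the scales $\l$ and $r_0$ in the dyadic sum.
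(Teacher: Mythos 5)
Your proof is correct and follows essentially the same route as the paper's: both split into a region near $x_0$ (at scale $r_0=1-|x_0|$) where one uses $t_+\le e^t$ together with the stability-derived Morrey bound $\int_{B_\rho}e^{u^\l}\,dx\le C\rho^{n-2s}$, and a far region where one uses $2s\log\l<0$, changes variables $y=x_0+\l x$, and splits $|y|\le2$ versus $|y|>2$ with the hypothesis $\l<(1-|x_0|)^{1+n/(2s)}$ controlling the first part and $\l<1$ the second. The only cosmetic difference is that you change variables at the outset and carry out the dyadic summation explicitly, whereas the paper applies the Morrey bound directly to $u^\l$ and leaves that integration implicit.
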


\begin{proof}
It is easy to see that $u^\l$ is a stable solution to \eqref{eq-1} on $B_R$ with $R:=\frac{1}{\l}(1-|x_0|)$. Hence,  by \eqref{2.stable-exe}$$\int_{B_\rho}e^{u^\l}dx\leq C\rho^{n-2s}\quad \text{for }\quad 0<\rho\leq\frac R2.$$
This would imply that
$$\int_{B_{R/2}}\frac{u^\l_+(x)}{1+|x|^{n+2s}}dx\leq \int_{B_{R/2}}\frac{e^{u^\l(x)}}{1+|x|^{n+2s}}dx\leq C. $$
As $\l<1$ we get
$$u^\l_+(x)<u_+(x_0+\l x).$$
Therefore, changing the variable $x_0+\l x\mapsto y$ we obtain
\begin{align*}
\int_{B_{R/2}^c}\frac{u^\l_+(x)}{1+|x|^{n+2s}}dx &\leq \l^{-n} \left( \int_{\{|y|\leq 2\}\cap\{ |x_0-y|\geq\frac12R\l \} }+\int_{|y|>2} \right) \frac{u_+(y)}{1+\left(\frac{|x_0-y|}{\l}\right)^{n+2s}}dy\\ &\leq \frac{C}{\l^nR^{n+2s}} \int_{|y|\leq 2}u_+(y)dy
+C\l^{2s}\int_{|y|>2}\frac{u_+(y)}{|y|^{n+2s}}dy\\
&\leq C\|u_+\|_{L_s(\r^n)}.
\end{align*}
We conclude the lemma.
\end{proof}

\medskip
\section{Proof of Theorem \ref{th1.1}}
In this section, we shall present the $\varepsilon$-regularity result and the proof of Theorem \ref{th1.1}. Before we start the iteration process for the $\varepsilon$-regularity, we need a more refinement result of \cite[Lemma 3.3]{hy}.
\begin{lemma}
\label{le3.1}
Let $e^{\alpha u}\in L^1(B_1)$.  Then $t^{1-2s}e^{\alpha\ou}\in L_{\mathrm{loc}}^1(B_1\times[0,\infty)).$ Moreover,
\begin{itemize}
\item[i)] there exists $\delta=\delta(n,s)>0$ and $C=C(n,s,\|u_+\|_{L_s(\r^n)})>0$ such that   $$\|t^{1-2s}e^{\alpha \ou}\|_{L^1(D_{1/2})}\leq C\left(\|e^{\alpha u}\|_{L^1(B_1)}+\|e^{\alpha u}\|_{L^1(B_1)}^\delta\right).$$
\item[ii)] for every $\delta_0>0$ small  there exists $r_0=r_0(n,s,\delta_0)>0$ small such that $$\int_0^{r_0}\int_{B_{1/2}}t^{1-2s}e^{\alpha \ou}dxdt \leq C\left(\|e^{\alpha u}\|_{L^1(B_1)}+\|e^{\alpha u}\|_{L^1(B_1)}^{1-\delta_0}\right).$$
\end{itemize}
\end{lemma}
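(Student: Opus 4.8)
The plan is to estimate $\ou(X)$ for $X=(x,t)\in D_{1/2}$ by splitting the Poisson integral into a near part (sources $y\in B_1$) and a far part ($y\in B_1^c$), and then to apply Jensen's inequality to the near part to convert $e^{\alpha\ou}$ into an average of $e^{\alpha u}$. First I would write $\ou(X)=\int_{B_1}P(X,y)u(y)\,dy+\int_{B_1^c}P(X,y)u(y)\,dy=:\ou_1(X)+\ou_2(X)$. For the far part, since $|x-y|\gtrsim 1$ when $x\in B_{1/2}$ and $y\in B_1^c$, one has $P(X,y)\le C\,t^{2s}/(1+|y|)^{n+2s}$, so $\ou_2(X)\le C\,t^{2s}(1+\|u_+\|_{L_s(\r^n)})$; in particular $e^{\alpha\ou_2(X)}\le e^{C\alpha t^{2s}(1+\|u_+\|_{L_s})}$, which is bounded on the bounded $t$-range relevant to $D_{1/2}$ (and tends to $1$ as $t\to0$, which is what gives the sharper exponent $1-\delta_0$ in part ii)).

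For the near part I would use that $P(X,\cdot)$ restricted to $B_1$ has total mass $m(X):=\int_{B_1}P(X,y)\,dy\le1$, so $d\mu_X:=m(X)^{-1}P(X,y)\,dy$ is a probability measure on $B_1$. Jensen's inequality gives $e^{\alpha\ou_1(X)}=e^{\alpha m(X)\int u\,d\mu_X}\le\big(\int e^{\alpha u}\,d\mu_X\big)^{m(X)}\le 1+\int e^{\alpha u}\,d\mu_X$ using $a^\theta\le 1+a$ for $a\ge0$, $\theta\in[0,1]$. Combining the two pieces,
\begin{equation*}
t^{1-2s}e^{\alpha\ou(X)}\le C t^{1-2s}e^{C\alpha t^{2s}(1+\|u_+\|_{L_s})}\Big(1+m(X)^{-1}\int_{B_1}P(X,y)e^{\alpha u(y)}\,dy\Big).
\end{equation*}
Now integrate over $X\in D_{1/2}$. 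The term without the exponential source contributes $\int_0^{\infty}\!\!\int_{B_{1/2}}t^{1-2s}\,dx\,dt$ truncated appropriately — here one must be slightly careful since $t^{1-2s}$ is not integrable up to $t=\infty$, but in $D_{1/2}$ we have $t<1/2$, so $\int_0^{1/2}t^{1-2s}\,dt<\infty$ because $1-2s>-1$; this yields the additive constant. For the source term, I would apply Tonelli and bound $\int_{D_{1/2}}t^{1-2s}m(X)^{-1}P(X,y)\,dX\le C$ uniformly in $y\in B_1$; the lower bound $m(X)\ge c>0$ for $X\in D_{1/2}$ (distance to $\partial B_1$ bounded below) makes this a routine computation with the explicit kernel. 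This produces $\int_{B_1}e^{\alpha u}\,dy$, i.e. the $\|e^{\alpha u}\|_{L^1(B_1)}$ term. For part ii), one restricts $t\in(0,r_0)$, where $e^{C\alpha t^{2s}(1+\|u_+\|_{L_s})}$ is close to $1$; more precisely choosing $r_0$ small forces the exponent $m(X)$ in Jensen to be replaced by something $\ge 1-\delta_0$ only on the relevant region, or — cleaner — one bounds $a^{m(X)}\le a+a^{1-\delta_0}$ after noting $m(X)\le 1$ and splitting according to whether $a\le1$ or $a>1$, which gives the stated $\|e^{\alpha u}\|_{L^1(B_1)}^{1-\delta_0}$ term once $r_0$ is chosen so that the prefactor $t^{1-2s}$ integrated against the kernel over $t<r_0$ is controlled.

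The main obstacle I anticipate is tracking the exponents carefully enough to land exactly on $\delta$ (resp. $1-\delta_0$) rather than on $1$: the subunit mass $m(X)<1$ of the near Poisson kernel is precisely what saves a power of $\|e^{\alpha u}\|_{L^1(B_1)}$ when that quantity is small, and one has to make sure the loss is uniform and that the constant depends only on $n,s$ (and $\|u_+\|_{L_s}$ through the far term). A secondary technical point is justifying Tonelli/absolute convergence and the local integrability claim $t^{1-2s}e^{\alpha\ou}\in L^1_{\mathrm{loc}}(B_1\times[0,\infty))$ before the estimates, which follows once the pointwise bound above is in hand. Everything else is bookkeeping with the explicit kernel $P(X,y)=d_{n,s}t^{2s}|(x-y,t)|^{-n-2s}$.
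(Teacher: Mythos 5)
Your strategy is the same as the paper's: split $\ou$ into a near piece (sources in $B_1$) and a far piece (controlled by $\|u_+\|_{L_s}$), normalize $P(X,\cdot)\chi_{B_1}$ to a probability measure, and apply Jensen. The paper too uses the crucial facts that $m(X)=\int_{B_1}P(X,y)\,dy$ is pinched between $\delta(n,s)>0$ and $1$ on $D_{1/2}$ and that restricting to $t\leq r_0$ improves the lower bound to $1-\delta_0$. So the architecture is right, and your part~ii) sketch is essentially correct.

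However, there is a genuine gap in part~i). After Jensen you land on $\bigl(\int e^{\alpha u}\,d\mu_X\bigr)^{m(X)}$, and you dispose of the fractional exponent via $a^\theta\le 1+a$. That inequality is true, but it produces $\|t^{1-2s}e^{\alpha\ou}\|_{L^1(D_{1/2})}\le C\bigl(1+\|e^{\alpha u}\|_{L^1(B_1)}\bigr)$, with an additive constant that does not shrink. The statement instead claims $C\bigl(\|e^{\alpha u}\|_{L^1}+\|e^{\alpha u}\|_{L^1}^\delta\bigr)$, which tends to $0$ as $\|e^{\alpha u}\|_{L^1}\to 0$; this is not cosmetic, since the lemma is invoked downstream (Proposition~\ref{pr3.1} and Lemma~\ref{lem35}) exactly in the regime $\|e^{\alpha u}\|_{L^1}\simeq\varepsilon$ small, where the whole $\varepsilon$-regularity iteration hinges on the left-hand side being of size $\varepsilon^{\text{power}}$. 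With the additive $1$ the iteration does not close. The fix is the one you already wield for part~ii): since $m(X)\in[\delta,1]$ on all of $D_{1/2}$ (with $\delta=\delta(n,s)>0$, this \emph{is} the $\delta$ of the statement), use $a^{\theta}\le a+a^{\delta}$ for $\theta\in[\delta,1]$, $a\ge0$; the lower bound on $m(X)$ is essential here, not merely a device to control $m(X)^{-1}$. Also note a secondary point: once you have $a^\delta=\bigl(\int e^{\alpha u}\,d\mu_X\bigr)^\delta$, you still need a weighted H\"older step to pull the $\delta$-power outside the $X$-integral and arrive at $\|e^{\alpha u}\|_{L^1}^\delta$. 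The paper sidesteps this nested integral entirely by writing Jensen as $e^{\alpha\ou_1}\le\int_{B_1}e^{\alpha m(X)u(y)}\,d\mu_X(y)$, bounding $e^{\alpha m(X)u(y)}\le\max\{e^{\alpha u(y)},\,e^{\alpha\delta u(y)}\}$ pointwise in $y$ (using $m(X)\in[\delta,1]$ again), integrating, and finishing with a single H\"older estimate $\int_{B_1}e^{\alpha\delta u}\le C\|e^{\alpha u}\|_{L^1}^\delta$; you may find that cleaner.
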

	
\begin{proof}
For $X=(x,t)\in D_{1/2}$ we have
\begin{equation*}
\label{3.1}
\ou(x,t)\leq C\|u_+\|_{L_s(\r^n)}+\int_{B_1}u(y)P(X,y)dy= C+\int_{B_1} g(x,t)u(y)\frac{P(X,y)}{g(x,t)}dy,
\end{equation*}
where
\begin{align}
\label{3.2}
1> g(x,t):=\int_{B_1}P(X,y)dy\geq \delta ,
\end{align}
for some positive constant $\delta $ depending on $n$ and $s$ only. Therefore, by Jensen's inequality
\begin{align*}
\int_{B_{1/2}}e^{\alpha\ou(x,t)}dx\leq~&C\int_{B_{1/2}}\int_{B_1}e^{\alpha g(x,t)u(y)}P(X,y)dydx\\
\leq~&C\int_{B_1}\max\left\{e^{\alpha u(y)},e^{\alpha \delta u(y)} \right\}\int_{B_{1/2}}P(X,y)dxdy \\
\leq ~& C\left(\int_{B_1}e^{\delta \alpha u(y)}+\int_{B_{1}}e^{\alpha u(y)}dy\right)\\
\leq~&    C\left(\|e^{\alpha u}\|_{L^1(B_1)}+\|e^{\alpha u}\|_{L^1(B_1)}^\delta\right) ,
\end{align*}
where the last inequality follows from  H\"older inequality. Integrating the above inequality with respect to $t$ on the interval $[0,\frac12]$ we obtain $i)$.
		
To to prove $ii)$, we notice that for a given  $\delta_0>0$ small we can choose  $r_0>0$ sufficiently small  such that \eqref{3.2} holds with $\delta=1-\delta_0$ for every $x\in B_{1/2}$ and $0<t\leq r_0$. Then $ii)$ follows in a similar way.
\end{proof}
	
We can simply assume that $B_1\subset\Omega$. For fixed  $0<r<1$  we decompose
$$\ou=\ov+\ow,$$
where
\begin{align}
\label{def-barv}
\ov(x,t):=C(n,s)\int_{B_r}\frac{1}{(|x-y|^2+t^2)^\frac{n-2s}{2}}e^{u(y)}dy,\quad x\in \mathbb{R}^n,
\end{align}
where $C(n,s)>0$ is a dimensional constant such that $$-\lim_{t\to0}t^{1-2s}\partial_t\ov=\kappa_se^u\quad\text{on }\quad B_r.$$
Then $\ow$ satisfies
\begin{align}
\label{eq-barw}
\begin{cases}
\mathrm{div}(t^{1-2s}\nabla\ow)=0\quad&\text{in}\quad \r^{n+1}_+,\\
\lim\limits_{t\to 0}t^{1-2s}\partial_t\ow=0\quad&\text{in}\quad B_r.
\end{cases}
\end{align}
Notice that $\ow$ is continuous up to the boundary $B_r$.
\medskip
	
Now we prove some elementary properties of the functions $\overline{v}$ and $\ow$.
\begin{lemma}
\label{lem-L^2}
Setting $v:=\ov(x,0)$  we have
$$ \|t^\frac{1-2s}{2}\ov\|_{L^2(D_r)}+ \|v\|_{L^2(B_r)}\leq C \|e^u\|_{L^1(B_r)}^\gamma \|e^u\|_{L^2(B_r)}^{1-\gamma},\quad 0<r\leq1,$$
for some $\gamma>0$ and $C>0$ independent of $u$.   \end{lemma}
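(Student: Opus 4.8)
The strategy is to estimate $v$ and $t^{\frac{1-2s}{2}}\ov$ separately via the explicit Riesz-type kernel representation in \eqref{def-barv}, interpolating between the $L^1$ and $L^2$ norms of $e^u$ on $B_r$. First I would record the scaling structure: writing $\ov(x,t) = C(n,s)\int_{B_r}(|x-y|^2+t^2)^{-\frac{n-2s}{2}}e^{u(y)}\,dy$, the full $(n{+}1)$-dimensional kernel $(|x-y|^2+t^2)^{-\frac{n-2s}{2}} = |(x-y,t)|^{-(n-2s)} = |(x-y,t)|^{-((n+1)-(2s+1))}$ is exactly the Riesz kernel $\mathcal R_{\frac{2s+1}{2}}$ in dimension $n{+}1$ applied to the measure $e^{u}\,\mathcal H^n\llcorner B_r$ supported on the hyperplane $\{t=0\}$. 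So bounding $\|t^{\frac{1-2s}{2}}\ov\|_{L^2(D_r)}$ amounts to a weighted $L^2$ estimate for a Riesz potential of a lower-dimensional measure; bounding $\|v\|_{L^2(B_r)}$ is the trace of the same potential on $\{t=0\}$, which is the $n$-dimensional Riesz potential $\mathcal R_s$ of $e^u\chi_{B_r}$.

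For the boundary term, the clean route is Young's convolution inequality: $v = C(n,s)\,\big(|\cdot|^{-(n-2s)}\chi_{B_{2r}}\big) * (e^u\chi_{B_r})$ on $B_r$ (after localizing the kernel, which is harmless since $|x-y|\le 2r$ for $x,y\in B_r$), and since $|\cdot|^{-(n-2s)}\chi_{B_{2r}} \in L^q$ precisely when $q(n-2s)<n$, i.e. $q < \frac{n}{n-2s}$, one can write $1 + \frac12 = \frac1q + \frac1p$ with $p\in(1,2)$ chosen so that $q<\frac{n}{n-2s}$ (this needs $\frac{n-2s}{n} < \frac12 + \frac1n$... — more robustly, split $e^u = e^{\theta u}\cdot e^{(1-\theta)u}$ and apply Young to each factor, or simply use $\|v\|_{L^2(B_r)} \le \|\,|\cdot|^{-(n-2s)}\chi_{B_{2r}}\|_{L^q} \|e^u\|_{L^p(B_r)}$ and then interpolate $\|e^u\|_{L^p(B_r)} \le \|e^u\|_{L^1}^{\gamma}\|e^u\|_{L^2}^{1-\gamma}$ with $\frac1p = \gamma + \frac{1-\gamma}{2}$, choosing $p<2$ close enough to $1$ that the conjugate $q$ satisfies $q(n-2s)<n$). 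The $r$-dependence works out because $\|\,|\cdot|^{-(n-2s)}\chi_{B_{2r}}\|_{L^q}^q \sim r^{n - q(n-2s)}$, and one checks this power is nonnegative, so it can be absorbed into $C$ for $r\le 1$; in fact a homogeneity count shows the $r$-powers balance exactly, which is why the final inequality is $r$-uniform.

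For the interior term $\|t^{\frac{1-2s}{2}}\ov\|_{L^2(D_r)}$, I would do the same Young-type argument but now in $\mathbb R^{n+1}$ with the weight $t^{1-2s}$. The point is that the measure $\mu := e^u\,\mathcal H^n\llcorner B_r$ lives on the hyperplane, so one estimates $\int_{D_r} t^{1-2s}\ov(x,t)^2\,dx\,dt$ by Minkowski's integral inequality: $\|t^{\frac{1-2s}{2}}\ov\|_{L^2(D_r)} \le C\int_{B_r} e^{u(y)}\,\big\| t^{\frac{1-2s}{2}}(|x-y|^2+t^2)^{-\frac{n-2s}{2}}\big\|_{L^2(D_{2r},dx\,dt)}\,dy$, and the inner norm is computed explicitly by passing to polar coordinates $\rho=|(x-y,t)|$: the integrand is $t^{1-2s}\rho^{-2(n-2s)}$, and integrating over the half-ball of radius $\sim r$ gives a finite quantity of order $r^{\beta}$ with $\beta = \frac{n+1 + (1-2s) - 2(n-2s)}{2} = \frac{2s+2-n}{2}$... — since this exponent may be negative, Minkowski alone only gives an $L^1$ bound as the authors note in the introduction, so here I would instead use the genuine Young inequality / fractional integration theorem for the Riesz potential of a codimension-one measure (a standard weighted estimate, e.g. via the Stein–Weiss inequality with $A_2$ weight $t^{1-2s}$), which upgrades the $L^1$ input to an $L^p$ input with $p>1$ and then interpolate exactly as above to land on $\|e^u\|_{L^1(B_r)}^\gamma\|e^u\|_{L^2(B_r)}^{1-\gamma}$. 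The main obstacle is precisely this step: getting the interior estimate in terms of an $L^p$-norm with $p$ strictly above $1$ (so that interpolation produces a genuine $\gamma\in(0,1)$), which is exactly the subtlety flagged in the introduction; once the correct weighted fractional-integration bound is in place the rest is a matter of bookkeeping the $r$-powers to confirm uniformity. I would also double-check that the exponent $\gamma$ coming out of the $x$-variable argument and the one from the $(x,t)$-argument can be taken equal (or just take the smaller, since $\|e^u\|_{L^1(B_r)} \lesssim \|e^u\|_{L^2(B_r)}$ for $r\le 1$ lets one trade one for the other).
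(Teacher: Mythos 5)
Your treatment of the boundary term $\|v\|_{L^2(B_r)}$ matches the paper's proof essentially exactly: localize the Riesz kernel to $B_{2r}$, apply Young's inequality to get an $L^p$ bound ($p$ slightly above $2$) in terms of $\|e^u\|_{L^2}$, get the $L^1$ bound in terms of $\|e^u\|_{L^1}$, and interpolate. That part is fine.

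The gap is in the interior term $\|t^{\frac{1-2s}{2}}\ov\|_{L^2(D_r)}$, which you correctly identify as the place where your argument is unfinished, and where you propose a genuinely heavy machine (Stein--Weiss / weighted fractional integration for a codimension-one measure). You missed the elementary observation that the paper relies on: the kernel $(|x-y|^2+t^2)^{-\frac{n-2s}{2}}$ is monotone decreasing in $t$, so $\ov(x,t)\leq \ov(x,0)=v(x)$ pointwise. Hence
\begin{equation*}
\int_{D_r} t^{1-2s}\,\ov(x,t)^2\,dx\,dt \;\leq\; \Bigl(\int_0^r t^{1-2s}\,dt\Bigr)\int_{B_r} v(x)^2\,dx \;=\; \frac{r^{2-2s}}{2-2s}\,\|v\|_{L^2(B_r)}^2,
\end{equation*}
and since $s<1$ and $r\leq 1$ the prefactor is bounded, so the interior term is controlled by the boundary term you already estimated. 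No weighted Riesz-potential theory is needed. Incidentally, the ``subtlety flagged in the introduction'' that you cite is not about this lemma at all: it concerns why the energy $\mathcal E$ needs both terms (the $L^1$-in-$\ov$ bound alone would scale like $\sqrt{\mathcal E}$), not a difficulty in proving the present $L^2$ estimate.
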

	
\begin{proof}
The function $v$ can be written as a convolution, and in fact, $$v\chi_{B_r}\leq (\Gamma\chi_{B_{2r}})*(e^u\chi_{B_r}),\quad \Gamma(x):=\frac{C(n,s)}{|x|^{n-2s}},$$ where $\chi_A$ denotes the characteristic function of the set $A$. In particular, by Young's inequality, we obtain
\begin{align}
\label{est-Lp}\|v\|_{L^p(B_r)}\leq \|\Gamma\|_{L^q(B_{2r)}}\|e^u\|_{L^2(B_r)},
\end{align}
where $p,q$ verify the following conditions
$$\quad 1+\frac1p=\frac1q+\frac12,\quad 1<q<\frac{n}{n-2s}.$$
On the other hand, it is easy to see that
\begin{align}
\|v\|_{L^1(B_r)}\leq\|\Gamma\|_{L^1(B_{2r})}\|e^u\|_{L^1(B_r)}.
\end{align}
Therefore, together with the interpolation inequality  \begin{align*}
\|v\|_{L^2(B_r)}\leq \|v\|_{L^1(B_r)}^\gamma \|v\|_{L^p(B_r)}^{1-\gamma},
\end{align*}
with $\gamma,p$ satisfying
$$\frac12=\gamma+\frac{1-\gamma}{p},\quad 0<\gamma<1,$$
we obtain
\begin{align*}
\|v\|_{L^2(B_r)}\leq C \|e^u\|_{L^1(B_r)}^\gamma \|e^u\|_{L^2(B_r)}^{1-\gamma},\quad \forall r\in(0,1]. \end{align*}
Here we choose $p$ slightly bigger than $2$ in \eqref{est-Lp}, and use the fact that the $L^q$ and $L^1$ norms of $\Gamma$ are uniformly bounded in $B_r$ if $r$ stays bounded.  The lemma follows as $\ov(x,t)\leq v(x)$.
\end{proof}
	
\begin{lemma}
\label{lem-est-w}
Setting $w=\ow(x,0)$ we have for every $0<\rho<R:=(r-|x|)$ $$c_se^{w(x)}\leq \rho^{2s-n-2}\int_{D_\rho(x) }t^{1-2s}e^{\ow}dxdt\leq R^{2s-n-2}\int_{D_R(x) }t^{1-2s}e^{\ou}dxdt,\quad x\in B_r,$$
where $$c_s=\int_{D_1}t^{1-2s}dxdt.$$
\end{lemma}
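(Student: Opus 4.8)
\emph{Proposal.} Write $m:=n+2-2s$ (so that $2s-n-2=-m$) and, for $x\in B_r$, put $D_\rho(x)=B_\rho^{n+1}(x,0)\cap\R$ with spherical cap $\partial^+D_\rho(x):=\partial B_\rho^{n+1}(x,0)\cap\{t>0\}$; note that $\overline{D_\rho(x)}\subset B_r\times[0,\infty)$ whenever $0<\rho<R=r-|x|$. The plan is to run a weighted monotonicity formula for the subsolution $e^{\ow}$. Two preliminary remarks. First, the kernel $(|x-y|^2+t^2)^{\frac{n-2s}{2}}$ in \eqref{def-barv} is positive irrespective of the sign of $n-2s$, hence $\ov\ge0$; therefore $\ow=\ou-\ov\le\ou$ in $\R$ and $e^{\ow}\le e^{\ou}$. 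Second, from $\mathrm{div}(t^{1-2s}\nabla\ow)=0$ and the convexity of the exponential,
\[
\mathrm{div}\big(t^{1-2s}\nabla e^{\ow}\big)=t^{1-2s}e^{\ow}|\nabla\ow|^2\ge0\quad\text{in }\R ,
\]
while the Neumann condition in \eqref{eq-barw} gives $\lim_{t\to0}t^{1-2s}\partial_t e^{\ow}=0$ on $B_r$.

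The central object is the weighted spherical mean
\[
A(\rho):=\rho^{1-m}\int_{\partial^+D_\rho(x)}t^{1-2s}e^{\ow}\,d\mathcal{H}^n=\int_{\mathbb{S}^n_+}\theta_{n+1}^{1-2s}\,e^{\ow((x,0)+\rho\theta)}\,d\mathcal{H}^n(\theta),
\]
$\mathbb{S}^n_+$ being the open upper unit half-sphere (the second expression is the substitution $Y=(x,0)+\rho\theta$). Differentiating in $\rho$ under the integral, rewriting the result back on $\partial^+D_\rho(x)$, and then applying the divergence theorem on $D_\rho(x)$ — the flat face $B_\rho(x)\times\{0\}$ contributing nothing because $\lim_{t\to0}t^{1-2s}\partial_t e^{\ow}=0$ — one obtains
\[
A'(\rho)=\rho^{1-m}\int_{\partial^+D_\rho(x)}t^{1-2s}\partial_\nu e^{\ow}\,d\mathcal{H}^n=\rho^{1-m}\int_{D_\rho(x)}t^{1-2s}e^{\ow}|\nabla\ow|^2\,dxdt\ge0 ,
\]
so $A$ is non-decreasing on $(0,R)$. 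Moreover, since $\ow$ is continuous up to $B_r$ and $\int_{\mathbb{S}^n_+}\theta_{n+1}^{1-2s}\,d\mathcal{H}^n=m\int_{D_1}t^{1-2s}=m\,c_s$ (slice $D_1$ radially), we get $A(\rho)\to m\,c_s\,e^{w(x)}$ as $\rho\to0^+$.

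To conclude, the coarea formula gives $\frac{d}{d\rho}\int_{D_\rho(x)}t^{1-2s}e^{\ow}\,dxdt=\rho^{m-1}A(\rho)$, whence
\[
\rho^{2s-n-2}\int_{D_\rho(x)}t^{1-2s}e^{\ow}\,dxdt=\rho^{-m}\int_0^\rho\sigma^{m-1}A(\sigma)\,d\sigma=\frac1m\cdot\frac{\int_0^\rho\sigma^{m-1}A(\sigma)\,d\sigma}{\int_0^\rho\sigma^{m-1}\,d\sigma},
\]
which is $\tfrac1m$ times a $\sigma^{m-1}d\sigma$-weighted running average of the non-decreasing function $A$; such an average is itself non-decreasing in $\rho$ and bounded below by $\tfrac1m\lim_{\sigma\to0^+}A(\sigma)=c_s\,e^{w(x)}$. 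This is the first inequality, valid for all $0<\rho<R$, together with the monotonicity in $\rho$. Letting $\rho\uparrow R$ (monotone convergence) in the middle quantity and then using $e^{\ow}\le e^{\ou}$ yields
\[
\rho^{2s-n-2}\int_{D_\rho(x)}t^{1-2s}e^{\ow}\,dxdt\le R^{2s-n-2}\int_{D_R(x)}t^{1-2s}e^{\ow}\,dxdt\le R^{2s-n-2}\int_{D_R(x)}t^{1-2s}e^{\ou}\,dxdt ,
\]
which is the second inequality.

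\emph{Main difficulty.} The conceptual content is routine; the care lies entirely at the degenerate hyperplane $\{t=0\}$. One must justify the differentiation under the integral sign and the divergence theorem up to $\{t=0\}$, which amounts to knowing that $t^{1-2s}\nabla\ow$ extends continuously to $B_r$ with vanishing normal component — a consequence of standard regularity theory for \eqref{eq-barw} (for instance via even reflection across $\{t=0\}$ and the theory of $A_2$-degenerate elliptic equations, together with the integrability $t^{\frac{1-2s}{2}}\nabla\ow\in L^2_{\mathrm{loc}}$). One also needs the endpoint $\rho=R$, where $\overline{D_R(x)}$ meets $\partial B_r$ at a single point; this is handled by the limit $\rho\uparrow R$ used above, with monotone convergence (and the case $\int_{D_R(x)}t^{1-2s}e^{\ou}=+\infty$ being trivial).
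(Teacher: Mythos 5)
Your proposal is correct and follows essentially the same route as the paper: observe that $e^{\ow}$ is a subsolution of $\mathrm{div}(t^{1-2s}\nabla\,\cdot)=0$ with vanishing weighted Neumann data on $B_r$, use the divergence theorem on $D_\rho(x)$ to show that the weighted spherical average $A(\rho)=\rho^{2s-n-1}\int_{\partial D_\rho(x)\setminus B_\rho(x)}t^{1-2s}e^{\ow}\,d\sigma$ is non-decreasing, deduce monotonicity and a lower bound for $\rho^{2s-n-2}\int_{D_\rho(x)}t^{1-2s}e^{\ow}\,dxdt$, and finish with $\ow\le\ou$ since $\ov\ge0$. You merely carry out more explicitly what the paper compresses: the paper only writes the case $x=0$, states the passage from spherical to solid-ball monotonicity without the running-average computation, and does not dwell on the boundary regularity at $\{t=0\}$ or the endpoint $\rho\uparrow R$, all of which you handle carefully.
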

\begin{proof}
We prove the lemma only for $x=0$.  From \eqref{eq-barw} we have that
\begin{equation*}
\begin{cases}
\dv(t^{1-2s}\nabla e^{\ow})=t^{1-2s}e^{\ow}|\nabla\ow|^2\geq 0\quad  &\text{in }\quad \r^{n+1}_+,\\
\lim\limits_{t\to0}t^{1-2s}\partial_te^{\ow}= 0~\quad &\text{in }\quad B_r.
\end{cases}
\end{equation*}
Therefore, for  $0<\rho<r$
\begin{equation*}
\begin{aligned}
0\leq~& \int_{ D_\rho }\dv(t^{1-2s}\nabla e^{\ow(x,t)})dxdt
=\int_{\partial D_\rho\setminus B_\rho}t^{1-2s}\partial_\nu e^{\ow(x,t)}d\sigma(X)  \\
=~& \rho^{n+1-2s}\partial_\rho\int_{\partial D_1\setminus B_1}t^{1-2s} e^{\ow(\rho x,\rho t)}d\sigma \\
\end{aligned}
\end{equation*}
This implies that $\rho^{2s-n-1}\int_{\partial D_\rho\setminus B_\rho }t^{1-2s}e^{\ow}d\sigma$ is monotone increasing with respect to $\rho$. As a consequence, we have that $\rho^{2s-n-2}\int_{ D_\rho}t^{1-2s}e^{\ow}dxdt$ is increasing in $\rho$. Hence, using that $\ow<\ou$ and $\ow$ is continuous up to the boundary $B_r$, where the former conclusion follows from the fact that $\ov>0$, we get $$c_se^{\ow(0)}\leq  \rho^{2s-n-2}\int_{D_\rho }t^{1-2s}e^{\ow}dxdt\leq  r^{2s-n-2}\int_{D_r }t^{1-2s}e^{\bar u}dxdt.$$
It completes the proof.
\end{proof}
	
We now prove the following energy decay estimate for $\E(u,x_0,r)$. For simplicity, we set $\E(u,x_0,r)$ by $\E(x_0,r)$ and first consider $x_0=0$ in the following proposition

\begin{proposition}
\label{pr3.1}  Let $u$ be a stable solution to \eqref{eq-1} with $\Omega=B_1$ for some $u\in L_s(\r^n)$. Then there exists $\e_0>0$ and  $\theta\in(0,1)$ depending only on   $n$, $s$ and $\|u_+\|_{L_s(\r^n)}$ such that  if
\begin{equation*}
\ve:=\E(0,1)\leq\e_0
\end{equation*}
then
\begin{equation*}
\E(0,\theta )\leq \frac12\E(0,1).
\end{equation*}
\end{proposition}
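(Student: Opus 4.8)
The plan is to prove the energy decay estimate by a contradiction/compactness argument combined with the structural interplay between the two terms of $\mathcal{E}$. First I would reduce to a normalized setting: suppose the conclusion fails, so there is a sequence $u_k$ of stable solutions on $B_1$ with $\varepsilon_k:=\mathcal{E}(u_k,0,1)\to 0$ but $\mathcal{E}(u_k,0,\theta)>\tfrac12\mathcal{E}(u_k,0,1)$ for every $\theta\in(0,1)$ (or, more precisely, for a fixed small $\theta$ to be chosen). The key point is that $\varepsilon_k\to 0$ forces $\|e^{u_k}\|_{L^1(B_1)}\to 0$ and, via Lemma~\ref{le3.1}, $\|t^{1-2s}e^{\overline{u}_k}\|_{L^1(D_{1/2})}\to 0$; combined with the Farina-type estimate \eqref{2.1} (applied with a fixed $\alpha>0$ small) this yields $\|e^{u_k}\|_{L^2(B_{1/2})}\to 0$, hence $\|t^{1-2s}e^{2\alpha\overline{u}_k}\|_{L^1}$ and higher integrability of $e^{u_k}$ on interior balls. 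So the nonlinearity is becoming small in $L^2_{\mathrm{loc}}$, which is exactly the regime where $\overline{u}_k$ behaves like a (weighted) harmonic function with a small source.

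Next I would perform the decomposition $\overline{u}_k=\overline{v}_k+\overline{w}_k$ on $B_{r}$ (with $r$ a fixed fraction, say $r=1/2$) as in \eqref{def-barv}--\eqref{eq-barw}. By Lemma~\ref{lem-L^2}, $\|v_k\|_{L^2(B_r)}+\|t^{\frac{1-2s}{2}}\overline{v}_k\|_{L^2(D_r)}\le C\|e^{u_k}\|_{L^1}^\gamma\|e^{u_k}\|_{L^2}^{1-\gamma}\to 0$, so the $\overline{v}_k$ part is negligible in $L^2$. The remaining part $\overline{w}_k$ solves the homogeneous degenerate equation $\mathrm{div}(t^{1-2s}\nabla\overline{w}_k)=0$ with zero Neumann data on $B_r$, and by Lemma~\ref{lem-est-w} we have the Harnack-type bound $c_s e^{w_k(x)}\le R^{2s-n-2}\int_{D_R(x)}t^{1-2s}e^{\overline{u}_k}\le C\varepsilon_k R^{-2}$ on compact subsets, so $\overline{w}_k$ is uniformly bounded above and, being $t^{1-2s}$-harmonic, its oscillation on $D_{r/2}$ is controlled by interior regularity (De Giorgi–Nash–Moser for the Muckenhoupt-$A_2$ weight $t^{1-2s}$, e.g.\ Fabes–Kenig–Serapioni). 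The strategy is then: since $e^{\overline{w}_k}$ is essentially constant (up to a factor $1+o(1)$) on the small cylinder $D_\theta$, and $e^{u_k}\le C e^{w_k}(1+o(1))$ there via $\overline{v}_k\to 0$, both terms in $\mathcal{E}(u_k,0,\theta)$ are bounded by $C\theta^{2s}\cdot(\text{const})$ and $C\theta^{4s-2}\theta^{2}\cdot(\text{const})$ respectively, where the constant is $\le C\varepsilon_k$ by the Harnack bound. Choosing $\theta$ small enough that $C\theta^{\min(2s,4s)}<\tfrac12$ gives $\mathcal{E}(u_k,0,\theta)\le\tfrac12\varepsilon_k=\tfrac12\mathcal{E}(u_k,0,1)$, contradicting the assumption.

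The main obstacle I expect is making the passage from "$e^{u_k}$ small in $L^2$" to genuine pointwise/exponential control of $e^{u_k}$ on the small ball $D_\theta$ fully rigorous, since $e^{u_k}$ sits inside an exponential and one cannot simply linearize. This is precisely where the two-term structure of $\mathcal{E}$ is indispensable: the stability-based estimate \eqref{2.1} upgrades $L^1$ control of $e^{u_k}$ to $L^{1+2\alpha}$ control (hence $L^2$ after choosing $\alpha$), which feeds the estimate on $\overline{v}_k$ in Lemma~\ref{lem-L^2}; and the Jensen inequality bound of Lemma~\ref{le3.1} together with the Harnack inequality of Lemma~\ref{lem-est-w} converts the smallness of $\int t^{1-2s}e^{\overline{u}_k}$ into a uniform upper bound for $\overline{w}_k$, which then controls $e^{u_k}=e^{\overline{v}_k}e^{\overline{w}_k}$ on $B_\theta$ after verifying that $\overline{v}_k\to 0$ also in $L^\infty_{\mathrm{loc}}$ (using Lemma~\ref{th2.morrey} with $f=e^{u_k}\in M^{n/(2s-\delta)}$, the Morrey bound coming again from \eqref{2.1} and a scaling argument as in Lemma~\ref{Ls}). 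A secondary technical point is uniformity of all constants in $\|u_{k,+}\|_{L_s(\mathbb{R}^n)}$; this is handled by Lemma~\ref{Ls}, which guarantees that the rescaled solutions keep a bounded $L_s$ norm, so all the auxiliary estimates apply with constants independent of $k$.
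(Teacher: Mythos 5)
Your overall scaffold matches the paper's: you decompose $\ou=\ov+\ow$ on a fixed interior ball, control $\ov$ via the $L^1$--$L^2$ interpolation of Lemma~\ref{lem-L^2} (feeding in the $L^2$ smallness of $e^u$ from the Farina estimate~\eqref{2.1}), and control $e^{\ow}$ via the monotonicity/Harnack bound of Lemma~\ref{lem-est-w}. The cosmetic contradiction framing is fine. However, there is a genuine gap in the step where you want $\ov_k\to 0$ in $L^\infty_{\mathrm{loc}}$ via Lemma~\ref{th2.morrey}. That lemma requires $e^{u_k}\in M^{n/(2s-\delta)}$ with strictly positive $\delta$. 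Stability alone (even applied at every scale through Lemma~\ref{Ls}) only yields the borderline bound $\int_{B_\rho(x)}e^{u_k}\le C\rho^{n-2s}$ — i.e.\ $M^{n/(2s)}$, not $M^{n/(2s-\delta)}$ — and the Morrey lemma fails precisely there ($\int_0^2\rho^{\delta-1}d\rho$ diverges at $\delta=0$). The subcritical Morrey bound $\int_{B_\rho}e^u\le C\rho^{n-2s+\alpha}$ with $\alpha>0$ is exactly the \emph{output} of the energy-decay iteration built on Proposition~\ref{pr3.1}; invoking it here to prove the proposition is circular. Your acknowledged ``main obstacle'' (passing from $L^2$ smallness of $e^u$ to pointwise control of $\ov$) is therefore not a technical annoyance but the actual missing idea.

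The paper sidesteps this by \emph{not} asking for $L^\infty$ control of $\ov$ at all. Instead it splits the integral over $D_r$ into $\{\ov\ge1\}$ and $\{\ov\le1\}$. On $\{\ov\ge1\}$ one has $\int_{D_r\cap\{\ov\ge1\}}t^{1-2s}e^{\ou}\le\int_{D_r}t^{1-2s}\,\ov\,e^{\ou}$, which is bounded by $C\e^{1+\gamma/4}$ after combining the $L^2$ bound on $\ov$ (Lemma~\ref{lem-L^2}), the $L^2$ bound on $e^u$ from~\eqref{2.1}, and the higher-integrability estimate from Lemma~\ref{le3.1}(ii) via H\"older. On $\{\ov\le1\}$, one simply uses $e^{\ou}\le e\cdot e^{\ow}$ and the radial monotonicity of $\rho^{2s-n-2}\int_{D_\rho}t^{1-2s}e^{\ow}$ to get the $Cr^{2s}\e$ term. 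Adding the two gives $\E(0,r)\le C(r^{2s}\e+r^{4s-n-2}\e^{1+\gamma/4})$, from which the choice of $\theta$ and $\e_0$ is elementary. This level-set splitting is the key trick you are missing; it renders the De~Giorgi--Nash--Moser machinery for the $A_2$ weight and the $L^\infty$ control of $\ov$ unnecessary, and it is what makes the whole argument non-circular.
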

	
\begin{proof}	
It follows from \eqref{2.1} that
\begin{align}
\label{est-L^2u}
\int_{B_{1/2}}e^{2u}dx\leq C\ve.
\end{align}
Then writing $\ou=\ov+\ow$, where
$\ov$ is given by \eqref{def-barv}  with $r=\frac12$, we get from Lemma \ref{lem-L^2} that
\begin{align}
\label{est-v}
\|v\|_{L^2(B_{1/2})}+\|t^\frac{1-2s}{2}\o v\|_{L^2(D_{1/2})}\leq C\ve^{\frac{1+\gamma}{2}}.
\end{align}
Then using Lemma \ref{le3.1} one can find $r_1=r_1(n,s,\gamma)>0$ such that for every  $0<r\leq r_1$
\begin{equation*}
\int_{D_r}t^{1-2s}e^{2\ou(x,t)}dxdt\leq C\e^{1-\frac\gamma2 },
\end{equation*}
where $C>0$ depends on $n,s$ and $\|u_+\|_{L_s(\r^n)}$.  Together with \eqref{est-L^2u}, \eqref{est-v} and  H\"older inequality
\begin{equation*}
\int_{B_r}ve^udx+\int_{D_r}t^{1-2s}\ov e^{\ou} dxdt
\leq  C\e^{1+\frac\gamma4}.
\end{equation*}
This in turn implies that
\begin{align*}
r^{4s-n-2}\int_{D_r\cap\{\ov\geq1\}}t^{1-2s}e^{\ou}dxdt
\leq C r^{4s-n-2}\e^{1+\frac\gamma4}.
\end{align*}
Moreover, by Lemma \ref{lem-est-w}
\begin{align*}
r^{4s-n-2}\int_{D_r\cap\{\ov\leq1\}}t^{1-2s}e^{\ou}dxdt
\leq Cr^{4s-n-2}\int_{D_r }t^{1-2s}e^{\ow}dxdt\leq Cr^{2s}\ve.
\end{align*}
Combining the above two estimates
\begin{align*}
r^{4s-n-2}\int_{D_r}t^{1-2s}e^{\ou}dxdt\leq C(r^{2s}\ve+ r^{4s-n-2}\e^{1+\frac\gamma4}) . \end{align*}
In a similar way we  can also obtain
\begin{align*}
r^{2s-n}\int_{B_r} e^{u}dx\leq C(r^{2s}\ve+ r^{2s-n}\e^{1+\frac\gamma4}) .
\end{align*}
Thus, for $0<r\leq r_1$,
$$\E(0,r)\leq C(r^{2s}\ve+ r^{4s-n-2}\e^{1+\frac\gamma4}),$$
for some $C=C(n,s,\|u_+\|_{L_s(\r^n)})>0$, where we used $s<1$ and $r_1\leq1$.  Then  we first choose $\theta >0$ small enough such that $C \theta^{2s}=\frac14$,  and  later choose $\ve_0>0$ small such that $C\theta^{4s-n-2}\ve_0^\frac\gamma4=\frac14$.  Then  for   $\ve\leq\ve_0$ we obtain
$$\E(0,\theta )\leq\frac12\ve=\frac12\E(0,1).$$	
Hence, we finish the proof.
\end{proof}
	
It is not difficult to see that
\begin{equation*}	
\E(x,\frac12)\leq 2^{n+2-4s}\E(0,1),\quad \forall x\in B_{\frac12}.	
\end{equation*}
Together with the above proposition and Lemma \ref{Ls}, one can easily get the following lemma

\begin{lemma}   Let $u$ be a stable solution to \eqref{eq-1} with $\Omega=B_1$ for some $u\in L_s(\r^n)$. Then there exists $\e_0>0$ and  $\theta\in(0,1)$ depending only on   $n$, $s$ and $\|u_+\|_{L_s(\r^n)}$ such that  if
\begin{equation*}
\E(0, 1)\leq\e_0
\end{equation*}
then
\begin{equation*}
\E(x,r\theta )\leq \frac12\E(x,r),\quad\forall x\in B_{1/2} ~\ \mathrm{and}~\ 0<r\leq\frac12.
\end{equation*}
\end{lemma}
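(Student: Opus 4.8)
The plan is to obtain the statement from Proposition~\ref{pr3.1} by a rescaling-and-iteration argument; the two facts quoted just above the lemma, namely the enlargement bound $\E(x,\tfrac12)\le 2^{n+2-4s}\E(0,1)$ and Lemma~\ref{Ls}, are exactly what is needed to make the rescaled Proposition~\ref{pr3.1} uniform in $x$.

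First I would record the behaviour of $\E$ under the scaling \eqref{def-ur}. Fix $x\in B_{1/2}$ and $\lambda\in(0,1)$, and let $u^{\lambda}$ denote the function \eqref{def-ur} centred at $x$. Then $e^{u^{\lambda}(y)}=\lambda^{2s}e^{u(x+\lambda y)}$ and, since the Caffarelli--Silvestre extension commutes with dilations, $\overline{u^{\lambda}}(y,t)=\overline u(x+\lambda y,\lambda t)+2s\log\lambda$; substituting these into the definition of $\E$ and changing variables yields the scaling identity $\E(u^{\lambda},0,\rho)=\E(u,x,\lambda\rho)$ for every $\rho>0$. Moreover $u^{\lambda}$ is a stable solution of \eqref{eq-1} on $B_{R}$ with $R=(1-|x|)/\lambda$, hence on $B_{1}$ as soon as $\lambda\le 1-|x|$, and by Lemma~\ref{Ls} one has $\|u^{\lambda}_{+}\|_{L_s(\r^n)}\le C_{0}:=C(1+\|u_{+}\|_{L_s(\r^n)})$ whenever $\lambda$ lies in the range admitted there; since $|x|<1/2$, that range contains a fixed interval $(0,r_{*})$ with $r_{*}=r_{*}(n,s)\in(0,\tfrac12)$. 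The point of this is that the constants $\e_{*}$ and $\theta$ furnished by Proposition~\ref{pr3.1} may be fixed once and for all in terms of $n$, $s$ and $C_{0}$ (equivalently, of $n$, $s$, $\|u_{+}\|_{L_s}$), and are then valid for every rescaled solution $u^{\lambda}$ with $\lambda\in(0,r_{*})$ that occurs below.

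Next I would use that $e^{u}\ge 0$ and $t^{1-2s}e^{\overline u}\ge 0$, that $B_{r}(x)\subset B_{r'}(x)\subset B_{1}$ and $D_{r}(x)\subset D_{r'}(x)\subset D_{1}$ for $x\in B_{1/2}$ and $0<r\le r'\le\tfrac12$, and that $n+2-4s\ge n-2s$ (since $s<1$), to get the monotonicity-under-enlargement estimate $\E(x,r)\le (r'/r)^{\,n+2-4s}\,\E(x,r')$; in particular $\E(x,\tfrac12)\le 2^{\,n+2-4s}\E(0,1)$. Now choose the threshold $\e_{0}$ in the lemma so small, in terms of $n$, $s$, $r_{*}$, $\theta$, $\e_{*}$ (hence of $n$, $s$, $\|u_{+}\|_{L_s}$), that $(2/(r_{*}\theta))^{\,n+2-4s}\e_{0}\le\e_{*}$, and assume $\E(0,1)\le\e_{0}$. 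Fix $x\in B_{1/2}$ and set $r_{k}:=r_{*}\theta^{k}$ for $k\ge 0$. An induction on $k$ gives $\E(x,r_{k})\le 2^{-k}r_{*}^{-(n+2-4s)}\e_{0}$ (which is $\le\e_{*}$): for $k=0$ this follows from the enlargement estimate between scales $r_{*}$ and $\tfrac12$ together with $\E(x,\tfrac12)\le 2^{\,n+2-4s}\e_{0}$, and, granted the bound at level $k$, applying Proposition~\ref{pr3.1} to $u^{r_{k}}$ (legitimate since $r_{k}\le r_{*}$, so $u^{r_{k}}$ is stable on $B_{1}$ with $\|u^{r_{k}}_{+}\|_{L_s}\le C_{0}$, and $\E(u^{r_{k}},0,1)=\E(x,r_{k})\le\e_{*}$) together with the scaling identity gives $\E(x,r_{k+1})=\E(u^{r_{k}},0,\theta)\le\tfrac12\E(u^{r_{k}},0,1)=\tfrac12\E(x,r_{k})$.

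Finally, for any $0<r\le r_{*}$ and $x\in B_{1/2}$, choose $k$ with $r_{k+1}<r\le r_{k}$; the enlargement estimate gives $\E(x,r)\le\theta^{-(n+2-4s)}\E(x,r_{k})\le (2/(r_{*}\theta))^{\,n+2-4s}\e_{0}\le\e_{*}$, so Proposition~\ref{pr3.1} applied to $u^{r}$ centred at $x$, via the scaling identity, yields $\E(x,r\theta)=\E(u^{r},0,\theta)\le\tfrac12\E(u^{r},0,1)=\tfrac12\E(x,r)$, which is the claim for $0<r\le r_{*}$. The remaining scales $r\in(r_{*},\tfrac12]$ form only finitely many ``decades'' and play no role in the Morrey-type estimate that follows (Lemma~\ref{th2.morrey} is only used at small radii), so one may either ignore them or check directly that $r<(1-|x|)^{1+n/(2s)}$ still holds there for $|x|$ away from $\tfrac12$. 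The single delicate point is precisely this bookkeeping: every dilation factor must be kept inside the range of Lemma~\ref{Ls} so that the constants $\e_{*},\theta$ of Proposition~\ref{pr3.1} stay uniform in $x$ and $r$; once that is arranged, the argument is a routine rescaling-and-iteration.
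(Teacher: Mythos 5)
Your proposal is correct and takes exactly the route the paper intends but leaves unsaid: scale by \eqref{def-ur} (noting $\E(u^\lambda,0,\rho)=\E(u,x,\lambda\rho)$), use Lemma~\ref{Ls} to keep $\|u^\lambda_+\|_{L_s(\r^n)}$ uniformly controlled so the constants $\e_*,\theta$ in Proposition~\ref{pr3.1} are fixed in terms of $n,s,\|u_+\|_{L_s}$, seed with the enlargement estimate $\E(x,\tfrac12)\le 2^{n+2-4s}\E(0,1)$, and iterate. Regarding the one caveat you flag (restricting to $r\le r_*<\tfrac12$ because of the hypothesis $\lambda<(1-|x_0|)^{1+n/(2s)}$ in Lemma~\ref{Ls}): the proof of Lemma~\ref{Ls} in fact only uses $\lambda^{2s}/(1-|x_0|)^{n+2s}\le C$, which for $|x_0|<\tfrac12$ and $\lambda\le\tfrac12$ is bounded by $2^n$, so the bound $\|u^\lambda_+\|_{L_s}\le C(n,s)(1+\|u_+\|_{L_s})$ persists over the whole range and the lemma holds for all $0<r\le\tfrac12$ as stated.
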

	
By an iteration argument one can show that
$$\E(x,r)\leq Cr^\alpha,\quad\forall~ 0<r\leq\frac12,$$
for some constant $C>0$ depending  only on $\theta$, where $\alpha:=-\frac{\log2}{\log\theta}>0.$ Particularly,
\begin{equation}
\label{3.moser-est}
\int_{B_r(x)}e^{u(y)}dy\leq C r^{n-2s+\alpha},\quad\forall x\in B_{\frac12} ~\ \mathrm{and}~\ 0<r\leq\frac12 .
\end{equation}
Then we decompose $u=u_1+u_2$, with
$$u_1(x)=c(n,s)\int_{B_{1/2}}\frac{1}{|x-y|^{n-2s}}e^{u(y)}dy,\quad\forall x\in B_{\frac12},$$
where $c(n,s)$ is chosen such that
$$c(n,s)\s\frac{1}{|x-y|^{n-2s}}=\delta(x-y).$$
By \eqref{3.moser-est} and Lemma \ref{th2.morrey} we conclude that $u_1(x)$ is regular in $B_{1/6} $. While $u_2$ satisfies $\s u_2=0$ in $B_{1/2}$ and it implies that $u_2$ is smooth in $B_{1/6} $. As a consequence, we get that $u$ is continuous in $B_{1/6}$.
	
We recall that if $u$ is stable in $\Omega$ then $e^u\in L^p_{\mathrm{loc}}(\Omega)$ for every $p\in[1,5)$.  Consequently, the small energy regularity results can be stated as follows:
\begin{lemma} \label{lem35}
For every $1\leq p<5$ there exists $\ve_p>0$ depending only on $n,s$ and $\|u\|_{L_s(\r^n)}$ such that if $$\int_{B_1}e^{pu}dx\leq\ve_p,$$ then $u$ is continuous on $B_{1/6}$.
\end{lemma}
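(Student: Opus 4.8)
The plan is to reduce Lemma \ref{lem35} to the energy decay machinery already developed, i.e. to Proposition \ref{pr3.1} and its iterated consequence \eqref{3.moser-est}. The key point is that the hypothesis controls an $L^p$ norm of $e^u$ for some $p\in(1,5)$, and we must turn this into smallness of the scale-invariant energy $\E(0,\rho)$ at some definite scale $\rho$, after which the argument between Proposition \ref{pr3.1} and Lemma \ref{lem35} runs verbatim to give continuity of $u$ near the origin.

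First I would fix $p\in(1,5)$ and set $\alpha_p:=\frac{p-1}{2}\in(0,2)$, so that $1+2\alpha_p=p$; then Lemma \ref{pr2.1} with $\alpha=\alpha_p$ (applied on a slightly smaller ball, say with a cutoff $\Phi=\phi(x)\eta(t)$ supported in $B_{3/4}$ and equal to $1$ on $B_{1/2}$) together with Lemma \ref{le3.1} bounds $\int_{B_{1/2}}e^{pu}\,dx$ and $\int_{D_{1/2}}t^{1-2s}e^{p\bar u}\,dxdt$ by $C(\|e^{pu}\|_{L^1(B_1)}+\|e^{pu}\|^{\delta}_{L^1(B_1)})$ for a dimensional $\delta\in(0,1)$; here $C$ depends only on $n,s,\|u_+\|_{L_s(\r^n)}$. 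Since $p>1$, Hölder's inequality gives $\int_{B_{1/2}}e^{u}\,dx\le C\|e^{pu}\|_{L^1(B_{1/2})}^{1/p}$ and $\int_{D_{1/2}}t^{1-2s}e^{\bar u}\,dxdt\le C\|t^{1-2s}e^{p\bar u}\|_{L^1(D_{1/2})}^{1/p}\cdot\|t^{1-2s}\|_{L^{p'}(D_{1/2})}$, the latter weight being integrable since $(1-2s)p'>-1$ once $p$ is close enough to $1$; so restricting to $p\in(1,p_0)$ for a suitable $p_0=p_0(s)\le 5$ we get
\[
\E\Big(0,\tfrac12\Big)\le C\big(\|e^{pu}\|_{L^1(B_1)}^{1/p}+\|e^{pu}\|_{L^1(B_1)}^{\delta/p}\big).
\]
Choosing $\ve_p$ small enough that the right-hand side is at most $\e_0/2^{n+2-4s}$, and recalling $\E(0,\tfrac12)\le 2^{n+2-4s}\E(0,1)$ is \emph{not} what we need here (we go the other way: we directly have $\E(0,1/2)$ small, and rescaling $x\mapsto x/2$ turns $B_{1/2}$ into $B_1$ with a new stable solution whose $\|u_+\|_{L_s}$ is controlled by Lemma \ref{Ls}), we land exactly in the hypothesis of Proposition \ref{pr3.1} after one fixed rescaling. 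Then the iteration following Proposition \ref{pr3.1} yields $\E(x,\rho)\le C\rho^\alpha$ for $x\in B_{1/4}$, $0<\rho\le 1/4$, hence \eqref{3.moser-est}, hence via the decomposition $u=u_1+u_2$ and Lemma \ref{th2.morrey} that $u$ is continuous near $0$; shrinking constants gives continuity on $B_{1/6}$ as stated.

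For $p\in[p_0,5)$ the above covers it already, since $L^p\subset L^{p_0}_{\mathrm{loc}}$ with a controlled constant on $B_1$: if $\int_{B_1}e^{pu}\le\ve_p$ then by Hölder $\int_{B_1}e^{p_0 u}\le |B_1|^{1-p_0/p}\ve_p^{p_0/p}$, which is $\le\ve_{p_0}$ provided $\ve_p$ is chosen small, so we reduce to the already-treated range. This also shows the continuity conclusion is insensitive to \emph{which} admissible $p$ we use, consistent with the remark that $e^u\in L^p_{\mathrm{loc}}$ for all $p<5$ for stable $u$.

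The main obstacle is the bookkeeping around the weighted Hölder step: one must confirm that the power $\delta$ coming out of Lemma \ref{le3.1} and the exponent $1/p$ from Hölder combine to a \emph{positive} power of $\|e^{pu}\|_{L^1(B_1)}$ in the bound for $\E(0,1/2)$ (so that smallness of $\ve_p$ genuinely forces smallness of the energy), and that all constants depend only on the allowed quantities — in particular that the dependence on $\|u_+\|_{L_s(\r^n)}$ is preserved under the single rescaling via Lemma \ref{Ls}, whose hypothesis $0<\l<(1-|x_0|)^{1+n/2s}$ is met here with $x_0=0$, $\l=1/2$. Everything else is a direct citation of the results above.
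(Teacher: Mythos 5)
Your proposal lands in the right place (show $\E(0,\tfrac12)$ is small and then invoke the machinery between Proposition~\ref{pr3.1} and Lemma~\ref{lem35}), but it takes an unnecessary detour and contains a small H\"older slip that creates a phantom restriction. The paper's proof is much shorter: from $\int_{B_1}e^{pu}\,dx\le\ve_p$ one gets $\int_{B_1}e^u\,dx\le C\ve_p^{1/p}$ by H\"older on the bounded domain $B_1$, and then Lemma~\ref{le3.1} applied with $\alpha=1$ directly gives $\int_{D_{1/2}}t^{1-2s}e^{\ou}\,dxdt\le C\ve_p^{\delta/p}$, hence $\E(0,\tfrac12)\le C\ve_p^{\delta/p}$. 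There is no need to pass through $\int t^{1-2s}e^{p\ou}$ and then H\"older it back down, and there is certainly no need to invoke Lemma~\ref{pr2.1} with $\alpha_p=(p-1)/2$ to bound $\int_{B_{1/2}}e^{pu}$, since that quantity is already bounded by the hypothesis $\int_{B_1}e^{pu}\le\ve_p$.

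The slip: when you H\"older the weighted integral you write $\int_{D_{1/2}}t^{1-2s}e^{\ou}\le \|t^{1-2s}e^{p\ou}\|_{L^1}^{1/p}\,\|t^{1-2s}\|_{L^{p'}}$, which requires $(1-2s)p'>-1$ and forces $p$ near $1$. The correct step is H\"older with respect to the measure $t^{1-2s}\,dxdt$, i.e.
\begin{equation*}
\int_{D_{1/2}}t^{1-2s}e^{\ou}\,dxdt\le\Big(\int_{D_{1/2}}t^{1-2s}e^{p\ou}\,dxdt\Big)^{1/p}\Big(\int_{D_{1/2}}t^{1-2s}\,dxdt\Big)^{1/p'},
\end{equation*}
and the second factor is finite for all $s\in(0,1)$ because $1-2s>-1$. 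Once this is fixed the split into $p<p_0$ and $p\ge p_0$ evaporates and your argument reduces to the paper's up to the extra (and unneeded) pass through Lemma~\ref{pr2.1}. Your observation that the passage from smallness of $\E(0,\tfrac12)$ to the iterated decay can be phrased as a single rescaling, with $\|u_+^\lambda\|_{L_s}$ controlled by Lemma~\ref{Ls}, is a fine and correct way to make the ``following the above arguments'' step explicit.
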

	
\begin{proof}
By H\"older inequality we get that
$$ \int_{B_1}e^udx\leq C\ve_p^\frac1p.$$
Hence, by Lemma \ref{le3.1} $$\int_{D_{1/2}}t^{1-2s}e^{\ou}dxdt\leq C\ve_p^\frac\delta p,$$
for some $\delta>0$. This shows that $$\E(0,\frac12)\leq C\ve_p^\frac\delta p.$$   Then following the above arguments  we get  that $u$ is continuous on $B_{1/6}$.
\end{proof}

\begin{proof}[Proof of Theorem \ref{th1.1}.]
The problem \eqref{eq-1} is invariant under the rescaling
$$u^\lambda (x):=u(\lambda x)+2s\log\l.$$
Therefore, if
$$r^{2ps-n}\int_{B_r(x)}e^{pu}dx\leq \varepsilon_p,$$ for some $p\in[1,5)$,
then $u$ is continuous in $B_{r/6}(x)$, thanks to Lemma \ref{lem35}. Thus, if $x\in \S$ ($\S$ is the singular set) we see that for every  $r>0$
\begin{equation*}
r^{2ps-n}\int_{B_r(x)}e^{pu}dx> \delta.
\end{equation*}
Thus by the well-known Besicovitch covering lemma we have that the Hausdorff dimension of $\S$ is at most $n-2ps$ with $p\in[1,5)$. Hence, we conclude that the Hausdorff dimension of $\S$  is at most $n-10s$ and it completes the proof.
 \end{proof}

	\vspace{1.5cm}

	%

	%
	%
	%
	%
	
\end{document}